\newtheorem{theorem}{Theorem}[section]
\newtheorem{definition}[theorem]{Definition}
\newtheorem{lemma}[theorem]{Lemma}
\newtheorem{proposition}[theorem]{Proposition}
\newcommand{\C}{\mathcal{C}}
\newcommand{\R}{\mathbb{R}}
\newcommand{\F}{\mathcal{F}}
\newcommand{\D}{\mathcal{D}}
\newcommand{\U}{\mathcal{U}}
\title{A family of convex sets in the plane satisfying the $(4,3)$-property can be pierced by 9 points}
\author{Daniel McGinnis}
\date{\today}
\begin{document}

\maketitle
\thispagestyle{fancy}%
\begin{abstract}
We prove that every finite family of convex sets in the plane satisfying the $(4,3)$-property can be pierced by $9$ points. This improves the bound of 13 proved by 
Gy\'arf\'as, Kleitman, and T\'oth in 2001 \cite{kleitman2001}. 
\end{abstract}

\section{Introduction}
For positive integers $p\ge q$, a family of sets $\C$ is said to satisfy the $(p,q)$-{\em property} if for every $p$ sets in $\mathcal{C}$, some $q$ sets have a point in common. We say that $\C$ {\em can be pierced
by $m$ points} if there exists a set of size at most $m$ intersecting every element in $\C$. The piercing number $\tau(\C)$ of $\C$ is the minimum $m$ so that $\C$ can be pierced by $m$ points. 

In 1957 Hadwiger and Debrunner \cite{hadwiger1957} conjectured that
for every given positive integers $p\ge q > d$, there exists a constant $c=c_d(p,q)$ such that  every finite family $\C$ of convex sets in $\mathbb{R}^d$ satisfying the $(p,q)$-property  has $\tau(\C) \le c$. This conjecture was proved  by Alon and Kleitman in 1992 \cite{alon1992}. Let $HD_d(p,q)$ denote the least such constant $c_d(p,q)$.

In general, the bounds on $c_d(p,q)$ given by Alon and Kleitman's proof are far from optimal. The first  case where $HD_d(p,q)$ is not known is when $d=2$, $p=4$, and $q=3$. In this case, the constant $c_2(4,3)$ given by the Alon-Kleitman proof is 343, while there is no known example of a family of convex sets in the plane that satisfy the $(4,3)$-property and cannot be pierced by $3$ points.  

In 2001, Gy\'arf\'as, Kleitman, and T\'oth \cite{kleitman2001} proved that $HD_2(4,3)\leq 13$, and since then this bound has seen no improvement. In this paper we prove that $HD_2(4,3)\leq 9$:
\begin{theorem}
If $\C$ is a finite family of convex sets in $\R^2$ then $\tau(\C)\le 9$. 
\end{theorem}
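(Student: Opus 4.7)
My approach would refine the Gy\'arf\'as--Kleitman--T\'oth directional-piercing strategy, trying to save the extra four points by tightening the extremal arguments at each step.

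\textbf{Step 1 (reductions).} First I would pass to a minimal counterexample $\C$, so that every set is essential and, after a small generic perturbation, the configuration is in general position. The case in which $\C$ contains two disjoint sets $A, B$ is handled separately: the $(4,3)$-property forces every other member of $\C$ to meet $A \cup B$, and a separating line between $A$ and $B$ lets one split $\C$ into two subfamilies that can be attacked inductively or directly. So I may assume every pair of sets in $\C$ intersects. By Helly, since $\tau(\C) > 1$ there exist triples with empty common intersection, and these ``critical triples'' will drive the argument.

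\textbf{Step 2 (directional witnesses).} I would then introduce four extremal piercing points, one for each cardinal direction. Define $p_L$ to be the leftmost point of $\R^2$ lying in three sets of $\C$ simultaneously (such points exist via the $(4,3)$-property applied to carefully chosen $4$-tuples), and define $p_R, p_T, p_B$ analogously in the other three directions. Using the extremality of $p_L$ together with the $(4,3)$-property, I would show that any set $A\in \C$ avoiding $p_L$ must lie strictly to the right of a vertical half-plane defined by $p_L$ and one of its witnessing triples: otherwise, replacing one member of the witnessing triple by $A$ produces a $4$-tuple violating either the $(4,3)$-property or the leftmost choice of $p_L$. Symmetric statements confine any set avoiding $p_R, p_T,$ or $p_B$.

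\textbf{Step 3 (core subfamily).} Let $\C^\ast$ be the subfamily of sets missed by all four witnesses. Step 2 traps each member of $\C^\ast$ in a ``central window'' bounded by the four witnessing half-planes and forces a rigid alignment of $\C^\ast$ with the four critical triples. I would then argue that $\tau(\C^\ast)\le 5$: the structural constraints should allow me to extract a horizontal or vertical transversal line through all of $\C^\ast$, reducing the problem to piercing a family of intervals on a line satisfying a $(4,3)$-type property, for which a piercing number of at most $5$ follows from a short one-dimensional argument (or alternatively from an inductive selection within $\C^\ast$ using a smaller witness budget). Adding the four directional witnesses then gives $\tau(\C)\le 9$.

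\textbf{Main obstacle.} The delicate step is Step 3. The overall constant is dominated by how tightly one can control the leftover family $\C^\ast$, and improvements over $13$ must come from exploiting the simultaneous interaction of each $A\in\C^\ast$ with all four critical triples. Producing a clean structural theorem---either a transversal line for $\C^\ast$ or a decomposition of $\C^\ast$ into a small number of $1$-pierceable pieces---is the real work, and any slack in this bookkeeping translates directly into a worse constant. This is where the improvement from $13$ to $9$ must be extracted.
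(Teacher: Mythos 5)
There is a genuine gap: your write-up is a plan rather than a proof, and the step you yourself flag as ``the real work'' (Step 3) is exactly the point where the bound is decided, yet nothing in the proposal establishes it. Concretely: (i) the existence and extremality properties of the four directional witnesses $p_L,p_R,p_T,p_B$ are asserted but the claimed confinement of sets avoiding them to half-planes is not proved, and the $(4,3)$-property alone does not obviously yield it without the careful case analysis of Gy\'arf\'as--Kleitman--T\'oth; (ii) the claim that the leftover family $\C^\ast$ admits a common transversal line is unsupported, and even granting such a line, the reduction to a one-dimensional $(4,3)$-type problem fails, because pairwise or triple intersections of convex sets crossing a common line need not occur \emph{on} that line, so the family of intervals $\{A\cap\ell\}$ does not inherit any useful $(p,q)$-property; and (iii) even if it did, Hadwiger--Debrunner gives $\tau\le 2$ for intervals with the $(4,3)$-property, not $5$, which suggests the accounting in Step 3 has not been thought through. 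Since the entire improvement from $13$ to $9$ must come from this step, the proposal does not constitute a proof.

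For comparison, the paper takes a structurally different route that avoids directional extremal points entirely. It parametrizes partitions of the unit disk into four regions $R^1_x,\dots,R^4_x$ by points $x$ of the $3$-simplex and applies the KKM theorem to the sets $A_i$ of partitions in which some intersecting triple has all its pairwise intersections inside $R^i_x$. Either some $x$ lies outside all $A_i$, in which case Tardos's theorem on $2$-intervals (applied to the two chords of the disk) already gives $\tau(\C)\le 8$; or KKM produces a single partition lying in all four $A_i$ simultaneously, whose central point $c$ is taken as one piercing point, and each of the four subfamilies $\C_i$ of sets avoiding $c$ and disjoint from $R^i$ is shown, again via Tardos's $2$-interval theorem applied to carefully chosen supporting lines, to be pierceable by $2$ points, for a total of $4\cdot 2+1=9$. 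The topological fixed-point input (KKM) and the $2$-interval transversal theorem are the mechanisms that replace the extremal bookkeeping you were hoping to tighten, and they are what actually deliver the saving over $13$.
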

The main tools in the proof are the following two theorems, and a geometrical analysis.

Let $\Delta^{n-1}$ denote the $n$-dimensional simplex on vertex set $e_1,\dots,e_n$.
\begin{theorem}[The KKM theorem \cite{knaster1929}]\label{thm:kkm}
Let $A_1,\dots,A_{n}$ be open sets of $\Delta^{n-1}$ such that for every face $\sigma$ of $\Delta^{n-1}$, $\sigma \subset \bigcup_{e_i \in \sigma} A_i$. Then  $\cap_{i=1}^{n} A_i\neq \emptyset$.
\end{theorem}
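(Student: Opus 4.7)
The plan is to reduce the open-set version of KKM to the classical closed-set version, and then prove the closed version via Sperner's lemma together with a compactness argument.

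For the reduction, I would construct closed sets $B_1,\dots,B_n$ with $B_i\subset A_i$ that still satisfy the KKM face-covering hypothesis. For each face $\sigma$ of $\Delta^{n-1}$, compactness of $\sigma$ together with openness of the relevant $A_i$ yields, via a finite subcover of $\sigma$ by closed balls each contained in some $A_i$ with $e_i\in\sigma$, a uniform constant $\epsilon_\sigma>0$ such that for every $x\in\sigma$ some such $i$ satisfies $d(x,A_i^c)\ge\epsilon_\sigma$. Setting $\epsilon=\min_\sigma\epsilon_\sigma>0$ (a minimum over the finitely many faces) and $B_i=\{x\in\Delta^{n-1}:d(x,A_i^c)\ge\epsilon\}$ then gives closed sets with $B_i\subset A_i$ for which the KKM condition carries over.

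For the closed case, I would take a sequence of triangulations of $\Delta^{n-1}$ with mesh tending to $0$. Each vertex $v$ of a triangulation lies in a unique minimal face $\sigma(v)$ of $\Delta^{n-1}$; by the KKM hypothesis applied to the $B_i$, some $i$ with $e_i\in\sigma(v)$ satisfies $v\in B_i$, and I would label $v$ by such an $i$. The resulting labeling is Sperner-admissible (any label $i$ assigned to a vertex lying on a face $\tau$ satisfies $e_i\in\tau$), so Sperner's lemma yields a fully labeled small simplex in every refinement. As the mesh shrinks, the diameter of these small simplices tends to $0$; by compactness of $\Delta^{n-1}$, a subsequence of their vertex-tuples converges to a single limit point $x^*$. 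Since each $B_i$ is closed and contains a vertex converging to $x^*$, we obtain $x^*\in\bigcap_i B_i\subset\bigcap_i A_i$.

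The only truly nontrivial input is Sperner's lemma itself, invoked as a classical combinatorial fact; the limit step is then routine because the $B_i$ are closed. The main subtlety of the plan is the reduction step, since one needs a single $\epsilon$ that works uniformly across all faces, which is why the finite-subcover argument is performed face by face and the minimum is taken over the (finite) face poset of $\Delta^{n-1}$. A tempting alternative using Brouwer's fixed-point theorem on $g(x)=\sum_i\phi_i(x)e_i$ with $\phi_i(x)=d(x,A_i^c)/\sum_j d(x,A_j^c)$ only produces a point $x^*$ lying in those $A_i$ whose index is in the minimal face containing $x^*$, not in every $A_i$, so I would avoid that route.
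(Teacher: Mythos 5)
The paper does not actually prove Theorem \ref{thm:kkm}: it imports the KKM theorem as a classical result, cited to \cite{knaster1929}, and uses it as a black box (its only role is to produce the point $x\in\bigcap_{i=1}^4 A_i$ in Theorem \ref{hard}). So there is no internal proof to compare against; what can be said is that your argument is a correct and complete proof of the version stated, and that it follows the classical route. Both of your stages check out. In the reduction, compactness of each face $\sigma$ together with openness of the $A_i$ with $e_i\in\sigma$ does give $\epsilon_\sigma>0$ such that every $x\in\sigma$ has $d(x,A_i^c)\ge\epsilon_\sigma$ for some admissible $i$, and since the face poset is finite, $\epsilon=\min_\sigma\epsilon_\sigma>0$; the sets $B_i=\{x\in\Delta^{n-1}:d(x,A_i^c)\ge\epsilon\}$ are then closed, contained in $A_i$, and inherit the face-covering hypothesis. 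In the closed case, your labeling (each triangulation vertex $v$ gets a label $i$ with $e_i\in\sigma(v)$ and $v\in B_i$, where $\sigma(v)$ is the minimal face containing $v$) is Sperner-admissible, so Sperner's lemma yields a rainbow simplex at every mesh scale, and closedness of the $B_i$ lets you pass to the limit along a convergent subsequence --- this is essentially the original 1929 argument. The one substantive remark is that the classical theorem is usually stated for closed sets while the paper needs the open-set variant (the sets $A_i$ in Section 2 are open by construction), so your reduction supplies exactly the bridge that the paper's citation silently elides; your closing caveat about the naive Brouwer/partition-of-unity route is also well taken, since that map only places the fixed point in those $A_i$ indexed by its minimal supporting face.
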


A matching in a family of sets $\F$ is a subset of pairwise disjoint sets in $\F$. The matching number $\nu(\F)$ is the maximum size of a matching in  $\F$.

Let $L_1,L_2$ be two homeomorphic copies of the real line. A  {\em $2$-interval} is a union $I_1\cup I_2$, where $I_i$ is an interval on $L_i$.


\begin{theorem}[Tardos \cite{tardos1995}]\label{2intThm}
If  $\F$ is a family of $2$-intervals then  $\tau(\F) \le 2\nu(\F)$.  
\end{theorem}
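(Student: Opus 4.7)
The plan is to prove Tardos's theorem via an application of the KKM theorem (Theorem~\ref{thm:kkm}), together with the classical one-dimensional identity $\tau = \nu$ for interval families on a line (which is provable by the greedy leftmost-right-endpoint argument). The KKM theorem is conveniently stated immediately above, strongly suggesting it as the intended tool.

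The starting observation is a natural reformulation. Piercing a 2-interval $I = I_1 \cup I_2$ requires hitting either $I_1 \subset L_1$ or $I_2 \subset L_2$. Consequently, any side-assignment $c : \F \to \{1, 2\}$ yields the upper bound $\tau(\F) \le \nu_1 + \nu_2$, where $\nu_j := \nu(\{I_j : c(I) = j\})$ is the matching number of the chosen $L_j$-intervals as a one-dimensional interval family (using $\tau = \nu$ on each line). The theorem therefore reduces to producing an assignment $c$ with $\nu_1 + \nu_2 \le 2\nu(\F)$.

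To produce such an assignment, I would fix a maximum matching $M = \{J_1, \ldots, J_\nu\}$ of $\F$ and parametrize \emph{fractional} side-assignments by the simplex $\Delta^{\nu-1}$. The vertex $e_j$ encodes a canonical extreme rule tied to the pair $(J_j^1, J_j^2)$; for example, assign each $I \in \F$ to the side $s$ for which $I_s$ lies closest to $J_j^s$ in an appropriate linear ordering, or more crudely assign $I$ to $s$ whenever $I \cap J_j$ occurs on line $L_s$. Interior points of $\Delta^{\nu-1}$ correspond to probabilistic mixtures of these extreme rules. For each $j$, define an open set $A_j \subset \Delta^{\nu-1}$ consisting of the fractional assignments that are ``bad at $J_j$'' in the sense of leaving a long disjoint chain on one of the two lines inside a small neighborhood of $J_j$. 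The plan is to show that the $A_j$'s cannot cover $\Delta^{\nu-1}$ in the KKM-compatible way, so that after the right complementation one obtains a fractional assignment that is good near every $J_j$ simultaneously, from which rounding yields an integral assignment with $\nu_1 + \nu_2 \le 2\nu$.

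The main obstacle will be the precise construction of the open cover $\{A_j\}$, both so that it is open, satisfies the KKM face condition (on each face $\sigma$, every point lies in some $A_j$ with $e_j \in \sigma$), and yields, at the guaranteed common point, an assignment that rounds to an integer side-assignment with the desired matching-number control. This requires carefully interleaving the combinatorics of the maximum matching with the geometric ordering of 2-intervals along $L_1$ and $L_2$; I expect the heart of the proof to be an argument that if a fractional assignment were bad at every $J_j$, then the $\nu$ different ``extreme'' rules would force at least $\nu+1$ pairwise disjoint 2-intervals, contradicting maximality of $M$. The fact that this yields the tight constant $2$ specifically for 2-intervals (and not its naive analogue for $t$-intervals with $t \ge 3$) is the strongest hint that a two-dimensional topological tool like KKM on $\Delta^{\nu-1}$ is exactly the right match for this problem.
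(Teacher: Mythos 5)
First, a point of comparison: the paper does not prove Theorem \ref{2intThm} at all; it is imported as a black box from Tardos's 1995 paper. So your proposal has to stand on its own as a proof of Tardos's theorem, and it does not. Your opening reduction is correct: by the classical $\tau=\nu$ identity for intervals on a line, any side-assignment $c:\F\to\{1,2\}$ gives $\tau(\F)\le\nu_1+\nu_2$, so it suffices to find an assignment with $\nu_1+\nu_2\le 2\nu(\F)$. But this reformulation is \emph{equivalent} to the theorem (given a piercing set $P$ of size $2\nu(\F)$, assign each $2$-interval to a side on which $P$ hits it; then $\nu_j\le|P\cap L_j|$), so the entire content of Tardos's theorem now lives in the construction you defer. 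That construction is never carried out: the parametrization of ``fractional assignments,'' the meaning of ``bad at $J_j$,'' and the rounding step are all left as options (``for example \dots or more crudely \dots''), and you concede the point yourself (``the main obstacle will be the precise construction''). A plan of this kind is not a proof, and the missing part is not routine --- no purely greedy or local-exchange argument is known to work here; Tardos's actual argument is a substantially harder topological one.

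Second, the way you propose to invoke KKM is logically inverted. Theorem \ref{thm:kkm} concludes $\bigcap_{i}A_i\neq\emptyset$ \emph{from} the hypothesis that every face $\sigma$ is covered by the sets indexed by its vertices. You instead want to show that your ``bad'' sets $A_j$ ``cannot cover $\Delta^{\nu-1}$ in the KKM-compatible way'' and then pass to complements. Failure of the KKM hypothesis only yields one point $y$ in one face $\sigma$ avoiding the sets $A_j$ with $e_j\in\sigma$; it says nothing about the sets indexed outside $\sigma$, so it does not produce a fractional assignment that is good at every $J_j$ simultaneously. To use KKM correctly you would have to define ``good'' sets $B_j$, verify the face condition for them (this is where all the combinatorial work would be), and take a point of $\bigcap_j B_j$ --- none of which is attempted. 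Finally, even granting such a point, there is a separate gap in the rounding: your badness is local (a long disjoint chain ``inside a small neighborhood of $J_j$''), whereas the quantity to be controlled, $\nu_1+\nu_2$, is global --- a large matching among the side-$1$ intervals need not concentrate near any single element of the maximum matching $M$. So both the KKM step and the rounding step fail as described, independently of each other.
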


\section{Using the KKM theorem}

Given a finite family $\C$ of convex sets we may assume that the sets are compact, by considering a set $S$  containing a point in each intersection of sets in $\C$, and replacing every set $C\in \C$ by $C'=\text{conv}\{s\in S \mid s\in C\}.$ 

Let $\C$ be a finite family of compact convex sets satisfying the $(4,3)$ property. We may clearly assume $|\C| \ge 4$. We scale the plane so that  
all the sets in $\C$ are contained in the open unit disk, which we denote by $\U$.
Let $f$ be a parameterization of the unit circle defined by $$f(t)=(cos(2\pi t),-sin(2\pi t))$$ for $t\in [0,1]$. 
For two points $a,b$ in the plane, let $\overline{ab}$ be the line through $a$ and $b$ and let $[a,b]$ be the line segment with $a$ and $b$ as endpoints.

Let $\Delta=\Delta^3$ be the standard $3$-dimensional simplex, and let  
 $x=(x_1,x_2,x_3,x_4)\in \Delta$.  For $1\leq i\leq 4$, define $R^i_x$ to be the interior of the region bounded by the arc along the circle from $f(\sum_{j=1}^{i-1}x_j)$ to $f(\sum_{j=1}^{i}x_j)$ (an empty sum is understood to be $0$) and by the line segments $[(1,0),f(x_1+x_2)]$ and $[f(x_1),f(x_1+x_2+x_3)]$ (see  Figure \ref{fig:regions}). Notice that if $x_i=0$, then $R^i_x=\emptyset$.

\begin{figure}[h!]
    \centering
    \begin{tikzpicture}
    \draw (0,0) circle (3cm);
\filldraw [black] (3,0) circle (2pt) node[right] {$(1,0)$};
\filldraw [black] (95:3cm) circle (2pt) node[above] {$f(x_1+x_2+x_3)$};
\filldraw [black] (145:3cm) circle (2pt) node[left] {$f(x_1+x_2)$};
\filldraw [black] (270:3cm) circle (2pt) node[below] {$f(x_1)$};

\draw (3,0) -- (145:3cm);
\draw (95:3cm) -- (270:3cm);

\filldraw [] (315:1.5cm) node[below] {$R^1_x$};
\filldraw [] (40:2cm) node[below] {$R^4_x$};
\filldraw [] (115:2.5cm) node[below] {$R^3_x$};
\filldraw [] (200:1.5cm) node[below] {$R^2_x$};
\end{tikzpicture}
 \caption{A point $x\in \Delta^3$ corresponds to four regions $R_x^i$.}
    \label{fig:regions}
\end{figure}
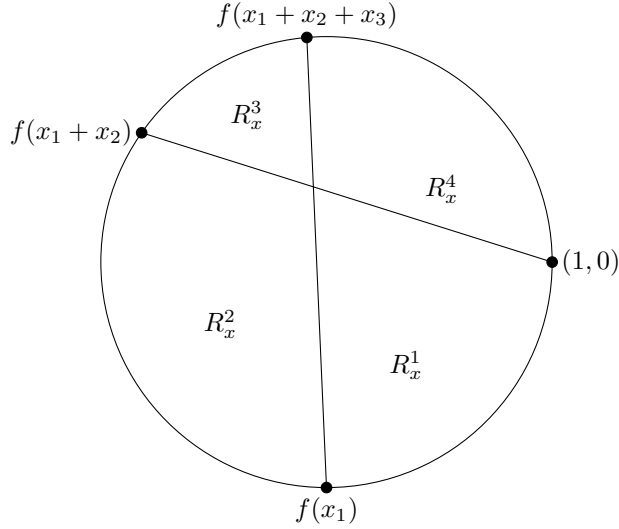

For every $1\leq i\leq 4$ define a  subset  $A_i$  of $\Delta$  as follows: $x\in \Delta^3$ is in $A_i$ if there exists three sets $C_1,C_2,C_3 \in \C$ such that $C_1\cap C_2 \cap C_3\neq \emptyset$ and  $C_j\cap C_k \subset R^i_x$ for all $1\le j< k \le 3$  (see Figure \ref{fig:A_i}). Observe that $A_i$ is open.




For every $x\in \Delta$ and $C\in \C$ let $I_C$ be the (possibly empty) 2-interval 
$$(C\cap [(1,0),f(x_1+x_2)])\cup (C\cap [f(x_1),f(x_1+x_2+x_3)]).$$ 

\begin{lemma}\label{easylem}
Suppose there exists  $x\in \Delta \setminus \Big( \cup_{i=1}^4A_i\Big)$. Then there exist two points $a,b$ such that if $a,b \notin C$ then $I_C\neq \emptyset$. 
\end{lemma}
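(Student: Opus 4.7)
The plan is to define $\mathcal{B} := \{C \in \C : I_C = \emptyset\}$ and show that $\mathcal{B}$ can be pierced by two points, which then serve as the $a, b$ in the statement. First I would observe that a compact convex set $C \subset \U$ satisfies $I_C = \emptyset$ precisely when $C$ avoids both chord segments; since $C$ is connected this happens if and only if $C$ lies entirely in one of the four open regions $R^i_x$. Hence $\mathcal{B} = \bigcup_{i=1}^4 \C_i$ where $\C_i := \{C \in \C : C \subset R^i_x\}$, and sets belonging to distinct $\C_i$'s are automatically disjoint because the regions $R^i_x$ are pairwise disjoint.

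Next, I would use the hypothesis $x \notin A_i$ to rule out three sets in any single $\C_i$ sharing a common point: if $C_1, C_2, C_3 \in \C_i$ met at a point, each pairwise intersection would lie in $R^i_x$, forcing $x \in A_i$. Combining this with the $(4,3)$-property, I would deduce $|\mathcal{B}| \le 3$. Indeed, any four sets in $\mathcal{B}$ would, by $(4,3)$, contain a triple with a common point; but such a triple must either lie entirely in a single $\C_i$ (ruled out by the previous sentence) or contain two sets belonging to distinct $\C_i, \C_j$ (impossible since such cross-region pairs are disjoint).

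To finish, if $|\mathcal{B}| \le 2$ then one point per set suffices. If $|\mathcal{B}| = 3$, I would argue the three sets cannot all be pairwise disjoint: otherwise, taking any fourth set $D_4 \in \C$ (which exists since $|\C| \ge 4$), the $(4,3)$-property would force some triple from $\{D_1, D_2, D_3, D_4\}$ to share a point, yet every such triple contains one of the pairwise disjoint pairs. So some two sets of $\mathcal{B}$ meet; take $a$ in their intersection and $b$ in the third set. The main technical step is the bound $|\mathcal{B}| \le 3$, which rests on simultaneously leveraging the disjointness of the $R^i_x$'s and the hypothesis $x \notin A_i$; once this is in hand the explicit choice of $a, b$ is routine.
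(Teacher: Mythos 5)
Your proof is correct and follows essentially the same route as the paper's: both arguments rest on (i) $I_C=\emptyset$ forcing $C$ into a single open region $R^i_x$, (ii) disjointness of sets lying in distinct regions, and (iii) the hypothesis $x\notin A_i$ combined with the $(4,3)$-property to forbid three such sets in one region. Your bookkeeping differs slightly and is, if anything, more complete: you bound $|\mathcal{B}|\le 3$ directly and then exhibit the two points explicitly, whereas the paper counts "at most two occupied regions, at most two sets per region" and leaves the final selection of $a,b$ implicit.
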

\begin{proof}
Note that since $\C$ does not contain three pairwise non-intersecting sets, at most two of the regions $R^i_x$ can contain a set in $\C$. 

We claim for every $i\le 4$, the region $R^i_x$ contains at most two sets in $\C$. Indeed, 
assume to the contrary that $R^i_x$ contains three  sets  $C_1,C_2,C_3 \in \C$. Then $C_1\cap C_2 \cap C_3=\emptyset$ since $x\notin A_i$. Applying the $(4,3)$ property to $C_1,C_2,C_3$ and some additional set $F\in \C$, we obtain that $C_j\cap C_k \cap F \neq \emptyset$ for some $1\le j<k \le 3$, and all pairwise intersections of $C_j,C_k,F$ are contained in $R^i_x$, contradicting   $x\notin A_i$. 

Since there are at most two regions containing sets in $\C$, the lemma is proved. 
\end{proof}

\begin{figure}[h!]
    \centering
    \begin{tikzpicture}
\draw (0,0) circle (3cm);
\filldraw [black] (3,0) circle (2pt) node[right] {$(1,0)$};
\filldraw [black] (95:3cm) circle (2pt) node[above] {$f(x_1+x_2+x_3)$};
\filldraw [black] (145:3cm) circle (2pt) node[left] {$f(x_1+x_2)$};
\filldraw [black] (270:3cm) circle (2pt) node[below] {$f(x_1)$};

\draw (3,0) -- (145:3cm);
\draw (95:3cm) -- (270:3cm);

\draw (315:1cm) circle [x radius=2.5cm, y radius=3mm, rotate=30];
\draw (325:.5cm) circle [x radius=5mm, y radius=2.4cm, rotate=30];
\draw (335:1.25cm) circle [x radius=5mm, y radius=2cm, rotate=-15];
\end{tikzpicture}
 \caption{Three sets $C_1,C_2,C_3\in C$ with $C_1\cap C_2 \cap C_3\neq \emptyset$ and $C_j\cap C_k \subset R^1_x$ for all $1\le j< k \le 3$, implying $x=(x_1,x_2,x_3,x_4) \in A_1$.}
    \label{fig:A_i}
\end{figure}
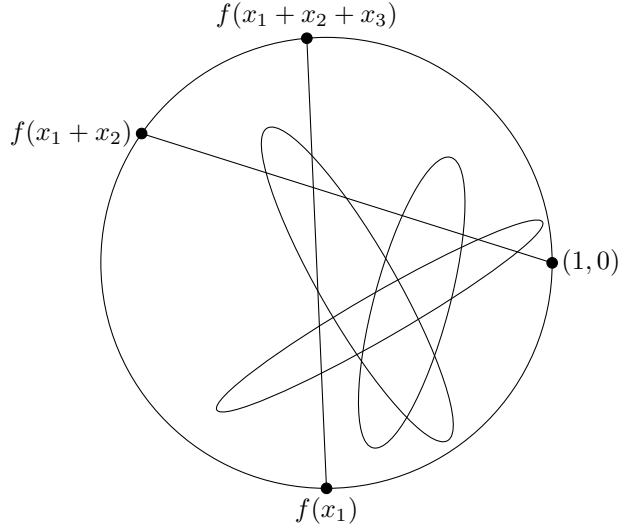



\begin{theorem}\label{easy}
If there exists  $x\in \Delta \setminus \Big( \cup_{i=1}^4A_i\Big)$,  then $\tau(\mathcal{C})\le 8$.
\end{theorem}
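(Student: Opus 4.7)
The approach is to reduce $\C$ to a family of 2-intervals and invoke Theorem \ref{2intThm}. First, apply Lemma \ref{easylem} to the given $x \in \Delta \setminus \bigcup_{i=1}^4 A_i$ to obtain two points $a, b$ such that every $C \in \C$ missing both $a$ and $b$ has $I_C \neq \emptyset$. Set $\C' = \{C \in \C : a, b \notin C\}$ and $\F = \{I_C : C \in \C'\}$. Then $\{a,b\}$ pierces $\C \setminus \C'$, and any point $q \in L_1 \cup L_2$ piercing $I_C$ automatically lies in $C$, where $L_1 = [(1,0), f(x_1+x_2)]$ and $L_2 = [f(x_1), f(x_1+x_2+x_3)]$. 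So it suffices to prove $\tau(\F) \le 6$, and by Theorem \ref{2intThm} it is enough to show $\nu(\F) \le 3$.

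Suppose for contradiction that $I_{C_1}, \dots, I_{C_4}$ are four pairwise disjoint 2-intervals in $\F$. Unwinding definitions, $I_{C_j} \cap I_{C_k} = \emptyset$ is equivalent to $C_j \cap C_k \cap (L_1 \cup L_2) = \emptyset$. By the $(4,3)$-property applied to $\{C_1, \ldots, C_4\}$, some three sets, say $C_1, C_2, C_3$, share a common point $p$. Since $p \in C_1 \cap C_2$ and $C_1 \cap C_2$ avoids $L_1 \cup L_2$, the point $p$ lies in some open region $R^i_x$. For each $1 \le j < k \le 3$, the intersection $C_j \cap C_k$ is convex, contains $p$, and is disjoint from $L_1 \cup L_2$; since $R^1_x, \ldots, R^4_x$ are precisely the connected components of $\U \setminus (L_1 \cup L_2)$, connectedness forces $C_j \cap C_k \subset R^i_x$. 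But then $C_1, C_2, C_3$ witness $x \in A_i$, contradicting $x \notin \bigcup_j A_j$.

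Therefore $\nu(\F) \le 3$, and Theorem \ref{2intThm} yields a piercing set $P \subset L_1 \cup L_2$ of $\F$ with $|P| \le 6$; the set $P \cup \{a, b\}$ then pierces $\C$ and has size at most $8$. The main obstacle is the matching-number bound in the middle paragraph: the essential geometric input is that a connected convex subset of $\U$ disjoint from $L_1 \cup L_2$ must lie in a single region $R^i_x$, and this is exactly what converts the common intersection point $p$ supplied by the $(4,3)$-property into the stronger condition $C_j \cap C_k \subset R^i_x$ defining membership in $A_i$.
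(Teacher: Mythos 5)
Your proof is correct and follows essentially the same route as the paper: reduce to the family of $2$-intervals $\{I_C\}$, use the $(4,3)$-property together with $x\notin\bigcup_{i=1}^4 A_i$ to bound the matching number by $3$, apply Theorem \ref{2intThm} to get $6$ piercing points, and add the two points from Lemma \ref{easylem}. The only (harmless) difference is that you spell out the connectedness argument showing a pairwise intersection avoiding both chords must lie in a single region $R^i_x$, a step the paper leaves implicit.
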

\begin{proof}
Let $\D = \{C\in \C \mid I_C \neq \emptyset\}$. 
We will show that $\tau(\D)\le 6$. Together with Lemma \ref{easylem} this will imply the theorem. 

Let $\mathcal{I}=\{I_C:C\in \mathcal{D}\}$. 
Let $C_1,C_2,C_3,C_4\in \mathcal{D}$ be four sets. Some three, say $C_1,C_2,C_3$, intersect by the $(4,3)$-property. Since $x\notin \cup_{i=1}^4 A_i$, the intersection of  two of these three sets, say $C_1 \cap C_2$, must intersect  either $[(1,0),f(x_1+x_2)]$ or $[f(x_1),f(x_1+x_2+x_3)]$. In other words, $I_{C_1} \cap I_{C_2} \neq \emptyset$.  This shows that $\mathcal{I}$ has no four pairwise disjoint elements, implying $\nu(\mathcal{I})\le 3$. Thus by Theorem  \ref{2intThm}, $\tau(\D)\le \tau(\mathcal{I})\le 6$.
\end{proof}

 By Theorem \ref{easy} we may assume that  
$\Delta \subset \cup_{i=1}^4 A_i$. We claim  that  in this case the sets $A_1,\dots,A_4$ satisfy the conditions of Theorem \ref{thm:kkm}. Indeed, suppose  $\sigma= \ \text{conv}\{e_i:i\in I\}$ is a face of $\Delta$, for some $I\subset [4]$, and let $y\in \sigma$. Then for all $j\in [4]\setminus I$, we have  $R_y^j=\emptyset$, implying  $y\notin A_j$. Since $y \in \cup_{i=1}^4 A_i$, we have that $y \in \bigcup_{i\in I} A_i$.
Thus by Theorem \ref{thm:kkm} we have:

\begin{theorem}\label{hard}
If $\Delta\subset \cup_{i=1}^4 A_i$, then there exists $x\in \bigcap_{i=1}^4 A_i$.
\end{theorem}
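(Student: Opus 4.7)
The plan is to deduce Theorem~\ref{hard} by a direct application of the KKM theorem (Theorem~\ref{thm:kkm}) to the family $A_1,A_2,A_3,A_4$ regarded as a cover of $\Delta$. Three hypotheses need to be verified: that each $A_i$ is open in $\Delta$; that $\bigcup_{i=1}^4 A_i = \Delta$; and that for every face $\sigma=\text{conv}\{e_i:i\in I\}$ one has $\sigma\subset\bigcup_{i\in I} A_i$. The openness was already recorded right after the definition of $A_i$ (the defining configuration involves three sets drawn from the finite family $\C$, and the condition that their pairwise intersections lie in the open region $R^i_x$ is preserved under small perturbations of $x$ because $R^i_x$ varies continuously with $x$ and the intersections are compact). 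The covering condition is precisely the hypothesis of Theorem~\ref{hard}.

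The only substantive step is the face condition. Fix $\sigma=\text{conv}\{e_i:i\in I\}$ and $y\in\sigma$, so that $y_j=0$ for every $j\notin I$. I would then invoke the parenthetical remark made in the definition of $R^j_x$: when $y_j=0$ the bounding arc from $f(\sum_{k<j}y_k)$ to $f(\sum_{k\le j}y_k)$ collapses to a single point, forcing $R^j_y=\emptyset$. But membership in $A_j$ requires three sets of $\C$ whose pairwise intersections are contained in $R^j_y$, and so $R^j_y=\emptyset$ implies $y\notin A_j$. Combined with $y\in\Delta\subset\bigcup_{i=1}^4 A_i$, this forces $y\in\bigcup_{i\in I} A_i$, giving the face condition.

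With the three hypotheses in hand, Theorem~\ref{thm:kkm} immediately produces a point $x\in\bigcap_{i=1}^4 A_i$, which is exactly the conclusion of Theorem~\ref{hard}. I do not anticipate any genuine obstacle here: the geometric setup — the choice of two chords cutting the disk into four regions indexed by $(x_1,x_2,x_3,x_4)\in\Delta$ — was engineered precisely so that the face vanishing $y_j=0\Rightarrow R^j_y=\emptyset$ aligns the sets $A_i$ with the KKM hypothesis. The theorem is the payoff of that design choice, and its proof is essentially a one-line invocation of the KKM theorem once the bookkeeping above is checked.
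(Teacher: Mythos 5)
Your proposal is correct and follows essentially the same route as the paper: the paper likewise records the openness of the $A_i$ after their definition, uses $y_j=0\Rightarrow R^j_y=\emptyset\Rightarrow y\notin A_j$ to verify the face condition, and then invokes the KKM theorem. No discrepancies to report.
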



For the rest of the paper we fix $x \in \bigcap_{i=1}^4 A_i$. 
Let $R^i_x=R^i$, and let $f_0=(1,0)$, $f_1=f(x_1)$, $f_2=f(x_1+x_2)$, and $f_3=f(x_1+x_2+x_3)$. Let $c$ be the intersection point of $[(1,0),f_2]$ and $[f_1,f_3]$, and let $\mathcal{C}' = \{C\in \C \mid c \notin C \}$. We use $\overline{R^i}$ to denote the topological closure of $R^i$.

\begin{proposition}\label{prop1}
If $C\in \mathcal{C}'$, then there exists some $i$ for which $C\cap R^i=\emptyset.$ 
\end{proposition}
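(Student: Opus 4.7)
The plan is to invoke the separating hyperplane theorem and then reduce everything to elementary geometry of the four angular sectors at $c$. Since $C$ is compact, convex, and does not contain $c$, there is an open half-plane containing $C$ whose bounding line strictly separates $C$ from $c$. Translating that line toward $c$ until it passes through $c$ yields a line $\ell$ through $c$ with $C$ contained in one of the two open half-planes determined by $\ell$.

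The key geometric observation I would make next is that each $R^i$ sits inside a closed angular sector at $c$. Indeed, $R^i$ is the open region bounded by one arc and by the two chord-segments from $c$ to the endpoints of that arc, so $R^i$ is contained in the closed convex cone at $c$ spanned by those two chord-rays. Labeling these four sectors $S^1,S^2,S^3,S^4$ in cyclic order so that $R^i\subset S^i$, the proposition reduces to showing that for every line $\ell$ through $c$, at least one $S^i$ lies entirely on the side of $\ell$ opposite to $C$.

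Then I would split into cases based on the position of $\ell$. If $\ell$ coincides with one of the two chords, it is the common boundary of two pairs of adjacent sectors and separates one pair from the other, so two of the $R^i$'s lie on each side of $\ell$ and $C$ misses (at least) two of them. If $\ell$ is neither chord, the two rays of $\ell$ from $c$ lie inside a pair of opposite sectors, say $S^{j}$ and $S^{j+2}$, and hence the remaining pair of opposite sectors $S^{j+1}$ and $S^{j-1}$ is split by $\ell$, one on each side; since $C$ lies in only one of the two open half-planes of $\ell$, it is disjoint from whichever of $R^{j+1},R^{j-1}$ lies on the other side.

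I do not anticipate any substantial obstacle: the argument is just convex separation combined with the elementary remark that each $R^i$ is trapped inside its own angular sector at $c$. The only part that requires care is the case analysis on $\ell$, and in particular the verification that in the generic case ($\ell$ not a chord) the two sectors not traversed by $\ell$ genuinely lie in opposite half-planes—which follows because opposite sectors, by definition, are related by the point reflection through $c$, so any line through $c$ separates them.
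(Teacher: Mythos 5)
Your proof is correct, but it takes a genuinely different route from the paper's. The paper argues the contrapositive by a direct convexity computation: if $C$ had a point $p_i$ in each $R^i$, then $[p_1,p_2]$ meets the chord $[f_1,f_3]$ at a point $q_1\in[f_1,c]$ and $[p_3,p_4]$ meets it at a point $q_2\in[c,f_3]$, so $C\supset[q_1,q_2]\ni c$, a contradiction. You instead separate the compact convex set $C$ from $c$ by a line $\ell$ through $c$, note that each $R^i$ lies in the closed angular sector $S^i$ at $c$ spanned by the rays from $c$ toward $f_{i-1}$ and $f_i$ (equivalently, $R^i$ is the intersection of the disk with two of the half-planes $L_1^{\pm},L_2^{\pm}$), and then check that any line through $c$ leaves at least one full sector weakly on the side opposite $C$; your case split (whether or not $\ell$ is one of the chord lines $L_1,L_2$) and the point-reflection argument for opposite sectors are both sound, and the degenerate cases with $R^i=\emptyset$ are vacuous. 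The paper's argument is a bit shorter and avoids invoking the separation theorem; yours yields slightly more (that $C$ misses an entire closed sector except possibly the single point $c$, not just the open region $R^i$), though that extra strength is not used elsewhere in the paper. Either proof would serve.
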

\begin{proof}
Assume  $C$ has a point $p_i$ in each $R^i$. Then since $C$ is convex,  it contains the points  $q_1 =  [p_1,p_2] \cap [f_1,f_3]$ and  $q_2=[p_3,p_4]\cap [f_1,f_3]$. Since $q_1$ and $q_2$ lie in two different hyperplanes defined by the line $\overline{f_0f_2}$, $C$ must contain $c$, a contradiction.
\end{proof}


Let $\mathcal{C}_i$ denote the family of sets in $\mathcal{C}'$ that are disjoint from $R^i$. By Proposition \ref{prop1}, we have $\mathcal{C}'=\cup_{i=1}^4\mathcal{C}_i$. In the  remainder of the paper we prove the following:

\begin{theorem}\label{main2}
For every $i\le 4$, $\tau(\mathcal{C}_i) \le 2$.
\end{theorem}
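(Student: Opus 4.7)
I would fix $i=1$, the other cases being symmetric, and proceed in two parts. First, I would show that $\C_1$ is pairwise intersecting. Since $x\in A_1$, there exist three sets $G_1,G_2,G_3\in \C$ with $G_j\cap G_k\subset R^1$ for $1\le j<k\le 3$. For any $C_1,C_2\in \C_1$, applying the $(4,3)$-property to $\{C_1,C_2,G_1,G_2\}$ yields a triple with a common point. Any triple containing both $G_1$ and $G_2$ has its common point in $G_1\cap G_2\subset R^1$, which every set of $\C_1$ avoids. Hence the intersecting triple must include both $C_1$ and $C_2$, giving $C_1\cap C_2\neq \emptyset$.

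Next, I would exploit the geometry from the point $c$. Every $C\in \C_1$ is compact convex in $\overline{\U}$, avoids $c$, and misses the open pie-slice $R^1$ at $c$. Its angular shadow from $c$ (the set of directions of rays from $c$ that meet $C$) is therefore an arc of length less than $\pi$ in the direction circle, disjoint from the angular range of $R^1$ and hence contained in the complementary arc $\Gamma$. The shadows pairwise intersect as subarcs of $\Gamma$; moreover, the $(4,3)$-property of $\C$ lifts to the shadows (a triple intersection point in $\R^2$ yields a common direction from $c$), so the shadows are a family of intervals on the interval $\Gamma$ satisfying the $(4,3)$-property. The one-dimensional $(p,q)$-theorem for intervals then gives two directions $\theta_1,\theta_2\in \Gamma$ piercing all shadows; equivalently, every $C\in \C_1$ meets the union $r_1\cup r_2$ of the two rays from $c$ in directions $\theta_1,\theta_2$.

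Finally, I would convert the two-ray transversal into a two-point transversal via Tardos's $2$-interval theorem applied to the $2$-intervals $I_C=(C\cap r_1)\cup(C\cap r_2)$: showing $\nu(\{I_C\})\le 1$ would give $\tau(\{I_C\})\le 2$ and hence $\tau(\C_1)\le 2$. Verifying pairwise intersection of the $2$-intervals amounts to showing that for every pair $C,C'\in \C_1$, the non-empty planar intersection $C\cap C'$ actually meets $r_1\cup r_2$. The main obstacle is precisely this step: the pairwise intersection of the $C$'s is guaranteed only somewhere in $\R^2$, not on the chosen rays. I expect to close the gap by combining (a) the freedom in where $\theta_1,\theta_2$ are placed within the $1$D $(4,3)$-piercing, (b) a case analysis of how each $C$ is distributed among the pie-slices $\overline{R^2},\overline{R^3},\overline{R^4}$---in particular, pairwise intersection rules out a purely $\overline{R^2}$ set coexisting with a purely $\overline{R^4}$ set, since their only possible common point is $c$, which is forbidden---and (c) a further application of the $(4,3)$-property to constrain the remaining off-ray pair intersections.
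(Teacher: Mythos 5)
Your first step---showing that every two sets in $\mathcal{C}_1$ intersect by applying the $(4,3)$-property to $F_1,F_2$ and two of the witness sets for $x\in A_1$---is correct and is exactly how the paper opens its proof. The rest, however, has a genuine gap, and it is the one you flag yourself. To invoke Theorem \ref{2intThm} with matching number $1$ you must prove that for \emph{every} pair $F_1,F_2\in\mathcal{C}_1$ the planar set $F_1\cap F_2$ meets $r_1\cup r_2$, and nothing in your setup delivers this. The angular shadows of the sets pairwise intersect, so one-dimensional Helly already gives a single ray from $c$ meeting every \emph{set}; but that says nothing about where the pairwise intersections sit, and the family $\{F_1\cap F_2\}$ does not inherit any usable $(p,q)$-property on the direction circle that would let you $2$-pierce \emph{its} shadows (the $(4,3)$-property only controls triples among four sets, not among the $\binom{4}{2}$ pairwise intersections). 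Your items (a)--(c) are a plan rather than an argument, and (c) is precisely the missing content; moreover it is not clear that two rays emanating from $c$ can ever be made to work, since no choice of such rays is forced to separate the places where off-ray pair intersections can hide.

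The paper closes this gap with machinery your proposal never introduces. It anchors the two transversal curves not to rays from $c$ but to supporting lines of the three witness sets $C_1,C_2,C_3$ (those with $C_j\cap C_k\subset R^1$ and a common point). The key combinatorial input is Lemma \ref{pairs}: every $F_1\cap F_2$ with $F_1,F_2\in\mathcal{C}_1$ meets at least two of $C_1,C_2,C_3$, again by the $(4,3)$-property, because $F_1\cap F_2$ avoids $R^1$ while each $C_j\cap C_k$ lies inside it. The separation lemmas (Lemmas \ref{sep}, \ref{ssep}, \ref{sssep}) then show that a convex set avoiding $R^1$ and meeting two of the witnesses must cross one of two carefully chosen supporting lines or piecewise-linear curves $S_i^r(I)$, $S_i^l(I)$, $S_i'$, and a case analysis over the component structure of each $C_i\setminus R^1$ and the order of their traces on $Z=[f_1,c]\cup[c,f_0]$ completes the proof. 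Note that these curves are supporting lines of the $C_i$ at points of $Z$ and need not pass through $c$ at all, so salvaging your approach would require both an analogue of Lemma \ref{pairs} and a proof that the supporting structure can be replaced by rays through $c$; as it stands, the Tardos step cannot be completed.
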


 This will imply that  $\mathcal{C}$ can be pierced by $9$ points: two points for each $\mathcal{C}_i$ and the point  $c$.

\section{Piercing $\mathcal{C}_i$ by two points}
In this Section we prove Theorem \ref{main2}. Without loss of generality 
we prove the theorem for  $\mathcal{C}_1$.

\subsection{Preliminary definitions and observations}
Let $C_1,C_2,C_3 \in \C$ be the three sets witnessing the fact that $x\in A_1$; so $C_1\cap C_2 \cap C_3\neq \emptyset$ and $C_j\cap C_k \subset R^1$ for all $1\le j< k \le 3$. 

If there are two sets $F_1,F_2\in \mathcal{C}_1$ that do no intersect, then $F_1,F_2,C_1,C_2$ do not satisfy the $(4,3)$-property. Thus every two sets in $\mathcal{C}_1$ intersect. Also, if for some $1\le i \le 3$ we have $C_i\subset R^1$, then again by the  $(4,3)$-property  every three sets in $\mathcal{C}_1$ have a common point. This implies by Helly's theorem \cite{helly} that $\tau(\mathcal{C}_1)=1$.  So we may assume that no $C_i$ is contained in $R^1$. 

Let $L_1$ be the line  $\overline{f_1f_3}$ and let $L_2$ be the line  $\overline{f_0f_2}$. Let $L_1^+$ and $L_1^-$ be the closed halfspaces containing $f_0$ and $f_2$ respectively, and let $L_2^+$ and $L_2^-$ be the closed halfspaces containing $f_3$ and $f_1$ respectively (see Figure \ref{fig:Llines}). 

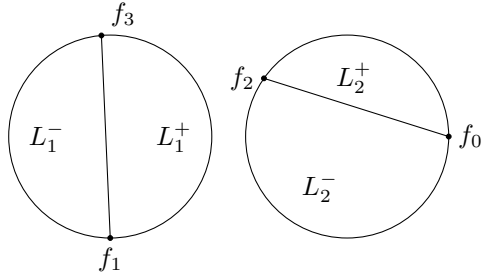
\begin{figure}[h!]
    \centering
    \vskip10pt
    \begin{tikzpicture}[scale=.45]
    \draw (0,0) circle (3cm);
    \filldraw [black] (95:3cm) circle (2pt) node[above] {$\ \ \ \ \ f_3$};
    \filldraw [black] (270:3cm) circle (2pt) node[below] {$f_1$};
    \filldraw [black] (160:2cm) node[below] {$L_1^-$};
    \filldraw [black] (20:2cm) node[below] {$L_1^+$};
    \draw (95:3cm) -- (270:3cm);
    \end{tikzpicture}
    \begin{tikzpicture}[scale=.45]
    \draw (0,0) circle (3cm);
    \filldraw [black] (270:3cm)  node[below] {\phantom{$f_1$}};
    \filldraw [black] (3,0) circle (2pt) node[right] {$f_0$};
    \filldraw [black] (145:3cm) circle (2pt) node[left] {$f_2$};
    \filldraw [black] (270:1.5cm) node[left] {$L_2^-$};
    \filldraw [black] (60:2cm) node[left] {$L_2^+$};
    \draw (3,0) -- (145:3cm);
    \end{tikzpicture}
    \caption{The regions $L_1^+, L_1^-$ and $L_2^+,L_2^-$.}
    \label{fig:Llines}
\end{figure}

By our assumption $C_i$ is not contained in $R^1$ for $1\le i\le 3$, and thus $C_i \cap (\mathbb{R}^2\setminus R^1)$ has at least one non-empty connected component.  
The next proposition shows that  $C_i \cap (\mathbb{R}^2\setminus R^1)$ has at most two  connected components. 

\begin{proposition}
For every $1\le i \le 3$, the set $C_i \cap (\mathbb{R}^2\setminus R^1)$ has at most two connected components. Moreover, if $C_i \cap (\mathbb{R}^2\setminus R^1)$ has two components, then the components are $C_i\cap \overline{R^2}$ and $C_i\cap \overline{R^4}$ and hence are convex. 
\end{proposition}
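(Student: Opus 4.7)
My plan is to write $C_i \cap (\mathbb{R}^2 \setminus R^1)$ as a union of two convex (hence connected) subsets of $C_i$ determined by the lines $L_1$ and $L_2$; from this both parts of the proposition fall out with essentially no further work.

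The starting observation is that $R^1$ is the intersection of the open half-plane on the $f_0$-side of $L_1$, the open half-plane on the $f_1$-side of $L_2$, and the open disk $\mathcal{U}$. Taking complements inside $\mathcal{U}$ gives $\mathcal{U}\setminus R^1 = \mathcal{U} \cap (L_1^- \cup L_2^+)$. Since $C_i \subset \mathcal{U}$, this yields
\[
C_i \cap (\mathbb{R}^2 \setminus R^1) \;=\; (C_i \cap L_1^-) \cup (C_i \cap L_2^+).
\]
Each piece is the intersection of two convex sets, so each is convex and in particular connected. Therefore the union has at most two connected components, proving the first part.

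For the \emph{moreover} statement, assume there are two components. Since the union of two intersecting convex sets is connected, the two pieces above must be disjoint, i.e., $C_i \cap L_1^- \cap L_2^+ = \emptyset$. This forces $C_i \cap L_1^- \subset L_2^-$, so
\[
C_i \cap L_1^- \;=\; C_i \cap L_1^- \cap L_2^- \;=\; C_i \cap \overline{R^2},
\]
where the last equality uses $C_i \subset \mathcal{U}$ together with $\overline{R^2} \cap \mathcal{U} = L_1^- \cap L_2^- \cap \mathcal{U}$. Symmetrically, $C_i \cap L_2^+ = C_i \cap \overline{R^4}$. Each component is thus an intersection of convex sets, hence convex.

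There is no serious obstacle in this argument: it is essentially bookkeeping once one has the right picture. The only point requiring attention is the open-versus-closed distinction when identifying $R^1$ inside $\mathcal{U}$, since this is precisely what makes the closed half-planes $L_1^-$ and $L_2^+$ (rather than their open versions) cover $\mathcal{U}\setminus R^1$, and it is this closed covering that in turn allows the two components in the degenerate case to land on the closures $\overline{R^2}$ and $\overline{R^4}$.
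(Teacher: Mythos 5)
Your proof is correct, and it takes a genuinely different and more economical route than the paper's. The paper argues by cases: if $c\in C_i$ it uses that $\mathbb{R}^2\setminus R^1$ is star-shaped about $c$ to get a single component, and if $c\notin C_i$ it splits further according to whether $C_i$ meets $\overline{R^3}$, invoking Proposition \ref{prop1} to show $C_i$ cannot meet all four closed regions, and concluding that either $C_i\cap(\mathbb{R}^2\setminus R^1)=C_i\cap(\overline{R^3}\cup\overline{R^4})$ is convex or the components are $C_i\cap\overline{R^2}$ and $C_i\cap\overline{R^4}$. Your identity $C_i\cap(\mathbb{R}^2\setminus R^1)=(C_i\cap L_1^-)\cup(C_i\cap L_2^+)$ replaces all of this with a single decomposition into two convex pieces, so the bound of two components is immediate, and the ``moreover'' part reduces to noting that having two components forces the two pieces to be disjoint, hence to lie in $L_2^-$ and $L_1^+$ respectively. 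Two small points to make explicit: first, the identification $\overline{R^2}\cap\mathcal{U}=L_1^-\cap L_2^-\cap\mathcal{U}$ (and its analogue for $R^4$) needs $R^2$ and $R^4$ to be nonempty, which holds here because the fixed $x$ lies in $\bigcap_{i=1}^4 A_i$ and each $R^i$ therefore contains a nonempty pairwise intersection of sets in $\C$; second, in the two-component case one should observe that each connected piece sits inside a single component and the two pieces exhaust the set, so the components are exactly the two pieces. Your approach buys brevity and independence from Proposition \ref{prop1}; the paper's case analysis proves no more than the statement requires, except that it also records in passing that the single component is convex when $C_i$ avoids $c$ and meets $\overline{R^3}$, a fact not used later.
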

\begin{proof}
If $C_i$ contains $c$, then $C_i \cap (\mathbb{R}^2\setminus R^1)$ has one component because the line segment from any point in $\mathbb{R}^2\setminus R^1$ to $c$ is contained in $\mathbb{R}^2\setminus R^1$. So assume $C_i$ does not contain $c$. 
Then it must have a point in either $\overline{R^4}$ or $\overline{R^2}$, without loss of generality, in $\overline{R^4}$.

Suppose $C_i$ contains a point in $\overline{R^3}$.
 Since $C_i$ does not contain $c$ but contains points in the three regions $\overline{R^1},\overline{R^4},\overline{R^3}$, then by Proposition \ref{prop1} 
it cannot contain a point in $\overline{R^2}$. 
Thus $C_i\cap (\mathbb{R}^2\setminus R^1)=C_i\cap (\overline{R^4}\cup \overline{R^3})$. This means that $C_i\cap (\mathbb{R}^2\setminus R^1)$ is an intersection of two convex sets, hence it is convex and has only one component.

Thus, if $C_i\cap (\mathbb{R}^2\setminus R^1)$ has more than one component, then $C_i$ does not have a point in $\overline{R^3}$. In this case the components of $C_i\cap (\mathbb{R}^2\setminus R^1)$ are $C_i\cap \overline{R^2}$ and $C_i\cap \overline{R^4}$ both of which are convex.
\end{proof}

Let $Z=[f_1,c]\cup [c,f_0]$. We think of $Z$ as a segment starting at $f_1$ and ending at $f_0$. Thus a point $a\in Z$ comes before a point $b\in Z$ if the distance from $a$ to $f_1$ on $Z$ is not larger than the distance from $b$ to $f_1$ on $Z$.

Let  $I_i^1=C_i \cap [f_1,c]$,  $I_i^2=C_i\cap [c,f_0]$, and $I_i=C_i\cap Z$.

For any interval (i.e., connected set) $I$ on $Z$, let $l(I)$ be the endpoint of $I$ that 
comes first on $Z$, and let $r(I)$ be the other endpoint.
Given a convex set $C$ and a point $p$ on the boundary of $C$, a \textit{supporting line for $C$} at $p$ is a line $L$ passing through $p$ that contains $C$ in one of the closed  halfspaces defined by $L$. 
For $1\le i\le 3$, let $C_i'=C_i\cap (\mathbb{R}^2\setminus R^1)$.

\begin{definition}
Let $A$ be a connected component of $C_i'$, and let $I=A\cap Z$ (so $I$ is an interval on $Z$). We define  supporting lines $S_i^l(I)$ and $S_i^r(I)$ for $C_i$ at the points $l(I)$ and $r(I)$, respectively, as follows:
\begin{itemize}
    \item If $A\subset L_j$ for some $j\in \{1,2\}$, 
    then $S_i^l(I)=S_i^r(I)=L_j$. 
    \item If $r(I)=c$, $A$ lies in $L_2^-$, and $A\not\subset L_1$, then $S_i^r(I)=L_2$.  
    \item If $l(I)=c$, $A$ lies in $L_1^+$, and $A\not\subset L_2$, then $S_i^l(I)=L_1$.
    \item Otherwise, set $S_i^l(I)$ and $S_i^r(I)$ to be any supporting line for $C_i$ at the point $l(I)$ and $r(I)$, respectively.
\end{itemize}
\end{definition}

\begin{definition}\label{def2comp}
Assume $C_i'$ has two components $A_1=C_i'\cap \overline{R^2}$ and $A_2=C_i'\cap \overline{R^4}$. We define $S_i'$ to be a  piece-wise linear curve as follows:
\begin{itemize}
    \item If $A_1\subset L_1$ and $A_2\subset L_2$, then $$S_i'=[f_1,r(I_i^1)]\cup [r(I_i^1),l(I_i^2)]\cup [l(I_i^2),f_0].$$
    \item If $A_1\subset L_1$ and $A_2\not\subset L_2$, then $$S_i'=[f_1,r(I_i^1)]\cup [r(I_i^1),l(I_i^2)]\cup (S_i^l(I_i^2)\cap \overline{R^4}).$$
    \item If $A_1\not\subset L_1$ and $A_2\subset L_2$, then $$S_i'=(S_i^r(I_i^1)\cap \overline{R^2})\cup [r(I_i^1),l(I_i^2)]\cup [l(I_i^2),f_0].$$
    \item If $A_1\not\subset L_1$ and $A_2\not\subset L_2$, then $$S_i'=(S_i^r(I_i^1)\cap \overline{R^2})\cup [r(I_i^1),l(I_i^2)]\cup (S_i^l(I_i^2)\cap \overline{R^4}).$$
\end{itemize}
\end{definition}

Note that in all the cases $S_i'$ lies in the closed halfspace defined by the line $\overline{r(I_i^1)l(I_i^2)}$ containing $f_0$ and $f_1$.

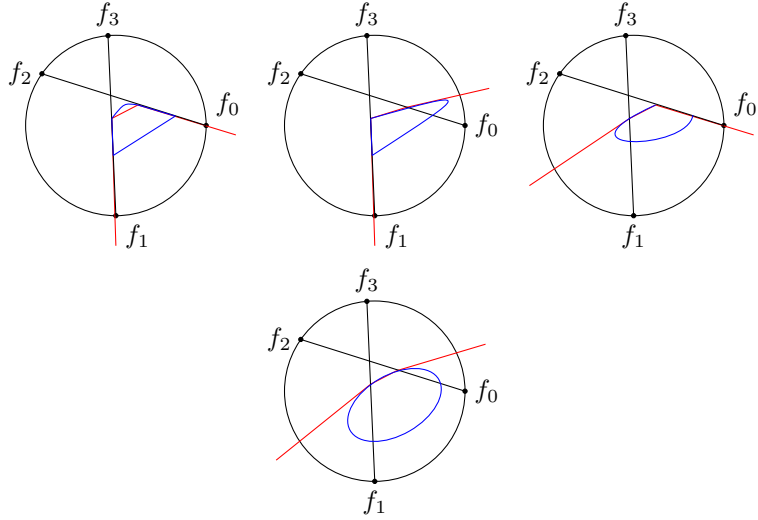
\begin{figure}[h!]
    \centering
    
\begin{tikzpicture}[scale=.4]
\draw (0,0) circle (3cm);
\filldraw [black] (3,0) circle (2pt) node[above right] {$f_0$};
\filldraw [black] (95:3cm) circle (2pt) node[above] {$ f_3$};
\filldraw [black] (145:3cm) circle (2pt) node[left] {$f_2$};
\filldraw [black] (270:3cm) circle (2pt) node[below right] {$f_1$};

\draw [red] (300:-.27cm) -- (43:1cm);
\draw [red] (43:1cm) -- (355.5:4cm);
\draw [red] (300:-.27cm) -- (270:4cm);

\draw (3,0) -- (145:3cm);
\draw (95:3cm) -- (270:3cm);

\draw [blue] (43:1cm) .. controls (70:.8cm) .. (300:-.27cm);
\draw [blue] (43:1cm) -- (9:2cm);
\draw [blue] (300:-.27cm) -- (265:1cm);
\draw [blue] (9:2cm) -- (265:1cm);

\end{tikzpicture}
\begin{tikzpicture}[scale=.4]
\draw (0,0) circle (3cm);
\filldraw [black] (3,0) circle (2pt) node[right] {$f_0$};
\filldraw [black] (95:3cm) circle (2pt) node[above] {$ f_3$};
\filldraw [black] (145:3cm) circle (2pt) node[left] {$f_2$};
\filldraw [black] (270:3cm) circle (2pt) node[below] {$\ \ \ \ \ f_1$};

\draw [red] (300:-.27cm) -- (30:1.2cm);
\draw [red] (30:1.2cm) -- (18:4cm);
\draw [red] (300:-.27cm) -- (270:4cm);

\draw (3,0) -- (145:3cm);
\draw (95:3cm) -- (270:3cm);

\draw [blue] (300:-.27cm) -- (265:1cm);
\draw [blue] (300:-.27cm) .. controls (20: 3.5cm) .. (265:1cm);

\end{tikzpicture}
\begin{tikzpicture}[scale=.4]
\draw (0,0) circle (3cm);
\filldraw [black] (3,0) circle (2pt) node[above right] {$f_0$};
\filldraw [black] (95:3cm) circle (2pt) node[above] {$ f_3$};
\filldraw [black] (145:3cm) circle (2pt) node[left] {$f_2$};
\filldraw [black] (270:3cm) circle (2pt) node[below] {$f_1$};

\draw [red] (300:-.27cm) -- (43:1cm);
\draw [red] (43:1cm) -- (355.5:4cm);
\draw [red] (300:-.27cm) -- (210:4cm);

\draw (3,0) -- (145:3cm);
\draw (95:3cm) -- (270:3cm);

\draw [blue] (43:1cm) .. controls (198:3cm) and (335:2cm) .. (10:2cm);

\end{tikzpicture}
\begin{tikzpicture}[scale=.4]
\draw (0,0) circle (3cm);
\filldraw [black] (3,0) circle (2pt) node[right] {$f_0$};
\filldraw [black] (95:3cm) circle (2pt) node[above] {$ f_3$};
\filldraw [black] (145:3cm) circle (2pt) node[left] {$f_2$};
\filldraw [black] (270:3cm) circle (2pt) node[below] {$f_1$};

\draw [red] (300:-.27cm) -- (43:1cm);
\draw [red] (43:1cm) -- (23:4cm);
\draw [red] (300:-.27cm) -- (215:4cm);

\draw (3,0) -- (145:3cm);
\draw (95:3cm) -- (270:3cm);

\draw [blue] (325:.8cm) circle [x radius=10mm, y radius=1.7cm, rotate=120];

\end{tikzpicture}
    \caption{Definition \ref{def2comp}. The blue set is $C_i$, and the red curve is $S_i'$. }
    \label{fig:my_label}
\end{figure}

\subsection{Five lemmas}

If $A_i$ is a component of $C_i'$ and  $I_i=A_i\cap Z$, we say that {\em $I_i$ comes before $I_j$ on $Z$} if the point $r(I_i)$ comes before $l(I_j)$ on $Z$.

\begin{lemma}\label{sep}
Let $1\le i\neq j\le 3$. Let $A$ be a component of $C_i'$ and $I=A\cap Z$. Let $B$ be a component of $C_j'$ and  $J=B\cap Z$. Suppose that $F \subset \U$ is a convex set such that $F\cap R^1=\emptyset$, $F\cap A\neq \emptyset$ and $F\cap B \neq \emptyset$. 
If $I$ comes before $J$ on $Z$, then $F\cap S_i^r(I)\neq \emptyset$  and  $F\cap S_j^l(J) \neq \emptyset$.
\end{lemma}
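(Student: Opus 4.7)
The plan is to pick witnesses $p_A \in F \cap A$ and $p_B \in F \cap B$; by convexity of $F$ the segment $[p_A,p_B]$ lies in $F$ and hence avoids $R^1$. Let $H^+$ denote the closed half-plane of $S := S_i^r(I)$ that contains $C_i$, and $H^-$ the other, so that $p_A \in A \subset C_i \subset H^+$. The key claim I aim to prove is $B \subset H^-$; granted this, $p_B \in H^-$, and the segment from $p_A \in H^+$ to $p_B \in H^-$ must meet $S$, giving $F \cap S_i^r(I) \neq \emptyset$. The companion assertion $F \cap S_j^l(J) \neq \emptyset$ follows from the symmetric argument obtained by swapping the roles of $(A,i,I,r(I))$ and $(B,j,J,l(J))$.

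To prove $B \subset H^-$, I argue by contradiction: suppose there is a point $q \in B \cap \mathrm{int}(H^+)$, and pick $r \in C_i \cap C_j \cap R^1$, which is non-empty because $C_1 \cap C_2 \cap C_3 \neq \emptyset$ and by the defining property of $A_1$ the pairwise intersections of $C_1,C_2,C_3$ lie in $R^1$; by a small perturbation within $C_i \cap C_j \cap R^1$ one may assume $r \in \mathrm{int}(H^+)$. Since $q,r \in C_j$ and $C_j$ is convex, $[q,r] \subset C_j$, and since $q,r \in \mathrm{int}(H^+)$, the segment $[q,r]$ lies in $\mathrm{int}(H^+)$. Because $q \in B \subset \overline{R^k}$ for some $k \in \{2,3,4\}$ while $r \in R^1$, the segment $[q,r]$ must cross the common boundary of $\overline{R^k}$ and $\overline{R^1}$, which is $[f_1,c]$ (if $k=2$), $[c,f_0]$ (if $k=4$), or the single point $c$ (if $k=3$). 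The crossing point $y$ lies in $C_j \cap Z$, and one checks in each case that $y \in J$. But $y \in \mathrm{int}(H^+)$, whereas by $l(J) > r(I)$ on $Z$ combined with the definition of $S_i^r(I)$, the set $J$ lies strictly in $\mathrm{int}(H^-)$. This contradicts $y \in J$, so $B \subset H^-$ as claimed.

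The main obstacle is the case analysis forced by the definition of $S_i^r(I)$ and the location of $r(I)$: the cases $r(I) = c$, $A \subset L_1$ or $A \subset L_2$ (so that $S_i^r(I)$ is mandated to equal that line), one-component $B$ that straddles $c$, and low-dimensional $C_i \cap C_j \cap R^1$ (so that the perturbation placing $r \in \mathrm{int}(H^+)$ needs extra care) all need separate verification. In each such scenario one must re-check that the crossing point $y$ still lies on a piece of $Z$ where $J$ and $\mathrm{int}(H^+)$ are disjoint; this disjointness is precisely the property that the bullet-point rules defining $S_i^r(I)$ are engineered to guarantee. Finally, I expect the case $k=2$ (or $k=4$) with $r(I)$ in the interior of the relevant straight piece of $Z$ to be the prototype, with the remaining cases reducing to it after the appropriate choice of supporting line.
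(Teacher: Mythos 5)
Your overall strategy coincides with the paper's: both arguments reduce the lemma to a separation statement for $S_i^r(I)$, proved by running a segment from a point of $B$ to a point of $C_i\cap C_j\subset R^1$ and using that the points of $Z$ coming after $r(I)$ avoid the region containing $C_i$. However, your key claim $B\subset H^-$ (equivalently, $J\subset \mathrm{int}(H^-)$) is false in general, and this is a genuine gap. The failure occurs when $r(I)\in[f_1,c)$ and the supporting line $S_i^r(I)$ also crosses the other arm $(c,f_0]$ of $Z$, say at a point $z$: the portion of $(c,f_0]$ beyond $z$ lies back inside $H^+$, so a component $B$ of $C_j'$ sitting in $\overline{R^4}$ near $f_0$ can lie entirely in $H^+$ even though $J$ comes after $I$ on $Z$. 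Your contradiction (a crossing point $y\in J\cap\mathrm{int}(H^+)$ versus $J\subset\mathrm{int}(H^-)$) then never materializes, since $y\in H^+$ is perfectly consistent there. Nothing in the definition of $S_i^r(I)$ prevents a supporting line at a point of $[f_1,c)$ from re-entering $(c,f_0]$, so this case cannot be dismissed.

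The lemma is still true in that configuration, but for a different reason, and this is exactly where the paper's proof does extra work: when $S_i^r(I)$ meets $(c,f_0]$, the set $(H^+\setminus R^1)\cap\U$ has two connected components; the paper shows only that $B$ avoids the component $H'$ containing $A$, and then concludes because the segment $[p_A,p_B]\subset F$ avoids $R^1$ and hence cannot pass from $H'$ to the other component without leaving $H^+$, so it meets $S_i^r(I)$ anyway. To repair your argument you must replace ``$B\subset H^-$'' by ``$B\cap H'=\emptyset$'' in this subcase and add this connectivity step. Two smaller points: when $A$ is a single point or a line segment one has $A\subset S_i^r(I)$ outright, which disposes of the degenerate cases immediately (the paper opens with this); and when $B\subset\overline{R^3}$ the segment $[q,r]$ need not pass through $c$ --- it enters $R^1$ through $[f_1,c]$ or $[c,f_0]$ --- although your conclusion $y\in J$ does still hold there.
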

\begin{proof}
Observe that if $A$ is a line segment or a single point, 
 then $A\subset S_i^r(I)$ and we are done. This is true since if $A\subset L_t$ for some $t\in \{1,2\}$  then $S_i^r(I)$ is $L_t$, and if $A$ is a line segment  not contained in $L_1$ or $L_2$, then $C_i$ must be a line segment so $A\subset S_i^r(I)$. So we may assume that $A$ is not a line segment, and in particular,   $I$ consists of more than one point.

We will show that $F\cap S_i^r(I)\neq \emptyset$. The other statement follows similarly. 

{\bf Case 1.} $r(I)\in [c,f_0]$.
We claim that $B$ does not have a point $L_2^-$. Indeed, by definition $B$ does not have a point in $R^1$. If $B$ has a point $p$ in $\overline{R^2}$, then for a point $q\in C_j\cap R^1$, we have that the line segment $[p,q]$ crosses $Z$ in $y\in [f_1,c]$. Therefore, $y$ comes before $l(J)$ on $Z$, a contradiction.

Now, if $S_i^r(I)=L_2$, then since  $A\subset L_2^-$ and $B \subset L_2^+$ then $F$ must intersect $L_2$  as needed. 
So we can assume $S_i^r(I)\neq L_2$. Let $H$ be the closed halfspace defined by $S_i^r(I)$ containing $C_i$. If $r(I)\in (c,f_0]$, then since $I$ consists of more than one point, $H$ contains a point of $[c,f_0]$ that comes before $r(I)$ on $Z$, so every point on $Z$ coming after $r(I)$ must lie in the complement of $H$. If $r(I)=c$, then since $S_i(r(I))\neq L_2$, we have that $A$ has a point $y$ in $L_2^+\setminus L_2$ (this follows from the definition of $S_i^r(I)$ when $r(I)=c$). 
 
 Note that $y\in \overline{R^3}$. This is true because  if $A$ has a point in $\overline{R^4}$ other than $c$, then $A$ must contain a point in $(c,f_0]$. By the convexity of $C_i$ and the fact that $I$ contains more than one point, we can conclude that $A$ has a point in $L_2\cap L_1^-$ other than $c$. Again, this implies that every point on $Z$ coming after $c$ lies in the complement of $H$. Now, if $B$ has a point $p$ in $H$, then for a point $q\in C_i\cap C_j\subset H\cap R^1$, we have that the segment $[p,q]$ crosses $Z$ in $H$. This is a contradiction since every point of $J$ lies in the complement of $H$.

{\bf Case 2.} $r(I)\in [f_1,c)$. If $S_i^r(I)=L_1$, then $A\subset S_i^r(I)$ and we are done. 

Otherwise, take $H$ to be the closed halfspace defined by $S_i^r(I)$ containing $C_i$. If $S_i^r(I)$ does not intersect $(c,f_0]$, then every point after $r(I)$ on $Z$ lies in the complement of $H$ and we can apply the previous argument to conclude that $B$ cannot contain a point in $H$. If $S_i^r(I)$ intersects $(c,f_0]$, then the set $(H\setminus R^1)\cap \U$ has two connected components. Let $H'$ be the component containing $A$. Every point that comes after $r(I)$ on $Z$ lies in the complement of $H'$, so a similar argument to the one used above shows that $B$ cannot contain a point in $H'$. Thus $F$ must intersect $S_i^r(I)$. 
\end{proof}

Similar arguments can be applied to prove the following lemma.

\begin{lemma}\label{ssep}
Let $1\le i\neq j\le 3$. Assume that $C_i'$ has two components $A_1=C_i'\cap \overline{R^2}$ and $A_2=C_i'\cap \overline{R^4}$. Let $B$ be a component of $C_j'$ and  $J=B\cap Z$. Suppose $F \subset \U$ is a convex set 
such that $F\cap R_x^1=\emptyset$ and $F\cap B\neq \emptyset$. If $F\cap A_1\neq \emptyset$ and $J$ comes after $I_i^1$ on $Z$, or if $F\cap A_2\neq \emptyset$ and $J$ comes before $I_i^2$ on $Z$, then $F\cap S_i'\neq \emptyset$.
\end{lemma}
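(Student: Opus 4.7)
The plan is to adapt the case analysis of Lemma \ref{sep}, replacing the single supporting line by the piecewise-linear curve $S_i'$. By the symmetry of the two sufficient conditions, it suffices to handle the first: $F \cap A_1 \neq \emptyset$ and $J$ comes after $I_i^1$ on $Z$. I select $p \in F \cap A_1 \subset \overline{R^2}$ and $q \in F \cap B$; by convexity of $F$, the segment $[p, q] \subset F$ avoids $R^1$ and lies in $\U$.

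First I dispose of the trivial sub-case $A_1 \subset L_1$: here $A_1 \subset L_1 \cap \overline{R^2} = [f_1, c]$, so $A_1 = I_i^1 \subset [f_1, r(I_i^1)]$, which is the first piece of $S_i'$ in Definition \ref{def2comp} (in both sub-cases with $A_1 \subset L_1$), and thus $p \in F \cap S_i'$. Next, assume $A_1 \not\subset L_1$, so the first piece of $S_i'$ is $S_i^r(I_i^1) \cap \overline{R^2}$. I apply Lemma \ref{sep} to $A = A_1$ together with $B$ and $F$, producing a crossing point $y \in [p, q] \cap S_i^r(I_i^1)$. If $y \in \overline{R^2}$, then $y \in S_i^r(I_i^1) \cap \overline{R^2} \subset S_i'$ and we are done.

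For the remaining case $y \notin \overline{R^2}$, the main tool is the observation—recorded after Definition \ref{def2comp}—that $S_i'$ lies in the closed halfspace $H^-$ of $\overline{r(I_i^1)\, l(I_i^2)}$ containing $f_0$ and $f_1$. A direct consequence is that $p \in A_1 \subset H^-$ (because the $C_i$-side of the supporting line restricted to $\overline{R^2}$ is contained in $H^-$), so the segment $[p,q]$ begins in $H^-$, while the assumption $y \notin \overline{R^2}$ together with the position of $q$ forces the segment to enter the opposite halfspace $H^+$. Hence $[p,q]$ crosses the chord line $\overline{r(I_i^1)\, l(I_i^2)}$. The remaining work is to show that this crossing falls on the middle piece $[r(I_i^1), l(I_i^2)]$ of $S_i'$. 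This will follow because $[p, q]$ exits $\overline{R^2}$ only through $[c, f_2]$ or through $c$: the arc is outside $\U$, and a transversal crossing of $[f_1, c]$ would violate $F \cap R^1 = \emptyset$. These exit possibilities pin the crossing with the chord line to the actual segment lying between $L_1$ and $L_2$, giving $F \cap S_i' \neq \emptyset$. I expect the main obstacle to be verifying cleanly that an alternative route of $[p,q]$—one which crosses the chord line outside $[r(I_i^1), l(I_i^2)]$ and still reaches $q$—is blocked by the first or third piece of $S_i'$; this is handled by a short case analysis on whether $A_2 \subset L_2$, reusing the $S_i' \subset H^-$ observation and the position of $R^1$ along $Z$.
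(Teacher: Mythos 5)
The paper does not actually write out a proof of Lemma \ref{ssep}; it only remarks that ``similar arguments'' to Lemma \ref{sep} apply, so your attempt has to be judged against that intended localization of Lemma \ref{sep}. Your first two steps are sound: the sub-case $A_1\subset L_1$ is correctly dispatched, and extracting a crossing point of $[p,q]$ with $S_i^r(I_i^1)$ is legitimate provided you appeal to the \emph{proof} of Lemma \ref{sep} (its statement only gives $F\cap S_i^r(I)\neq\emptyset$; the fact that the crossing lies on $[p,q]$ comes from the separation of $A_1$ from $B$ by the halfspace $H$, resp.\ its component $H'$, established there).

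The genuine gap is in your final case. The middle piece $[r(I_i^1),l(I_i^2)]$ of $S_i'$ joins a point of $(f_1,c)$ to a point of $(c,f_0)$ (note $c\notin C_i$ since $C_i'$ has two components, so neither endpoint is $c$), hence its relative interior lies in the open region $R^1$. Since $F\cap R^1=\emptyset$, the set $F$ can meet the middle piece only at its two endpoints, so an argument that merely produces a crossing of $[p,q]$ with the chord line $\overline{r(I_i^1)\,l(I_i^2)}$ cannot conclude: that line also meets $\overline{R^2}$ and $\overline{R^3}$ in points that are not on $S_i'$ at all, and your claim that the exit possibilities ``pin the crossing to the actual segment'' is false (a segment leaving $\overline{R^2}$ through $[c,f_2]$ crosses the chord line inside $\overline{R^2}$, far from $[r(I_i^1),l(I_i^2)]$). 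The correct completion is to show your remaining case is vacuous rather than to handle it: when $A_1\not\subset L_1$, the chord $S_i^r(I_i^1)\cap\overline{R^2}$ runs from $r(I_i^1)$ to a point of the arc (it cannot exit through $[f_2,c]$, since the halfspace of $S_i^r(I_i^1)$ containing $C_i$ must contain $l(I_i^2)$), so the component $P_1$ of $(\overline{R^2}\cap\U)\setminus S_i^r(I_i^1)$ containing $A_1$ is bounded, within $\U\setminus R^1$, only by this chord and by $[f_1,r(I_i^1)]\subset\partial R^1$. By the separation argument in the proof of Lemma \ref{sep}, $B$ is disjoint from the component of $(H\setminus R^1)\cap\U$ containing $A_1$, which contains $P_1$; and $B$ cannot meet $[f_1,r(I_i^1)]$ since $J$ comes after $I_i^1$. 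Hence $q\notin\overline{P_1}$ (unless $q$ already lies on the chord), and $[p,q]$, which stays in $\U$ and avoids $R^1$, can only leave $P_1$ through the chord $S_i^r(I_i^1)\cap\overline{R^2}\subset S_i'$. This yields a crossing point that is automatically in $\overline{R^2}$, so the case $y\notin\overline{R^2}$ never needs to be confronted.
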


We say that a set $F$ {\em lies below a line $L$ in $\overline{R^2}$} 
if $F$ does not intersect $L$, $F\subset \overline{R^2}$, and $F$ lies on the side of $L$ containing $f_1$. Note that if $F$ lies below a $L_1$ in $\overline{R^2}$ then $F$ must be empty. Similarly, we say that $F$ {\em lies below $L$ in $\overline{R^4}$} if $F$ does not intersect $L$, $F\subset \overline{R^4}$, and lies on the side of $L$ that contains $f_0$. If $L=L_2$, then again $F=\emptyset$.

\begin{lemma}\label{sssep}
Assume that for $1\le i\neq j\le 3$,  $C_i'$ and $C_j'$ both have two components:  $A_1=C_i'\cap \overline{R^2}$, $A_2=C_i'\cap \overline{R^4}$, $B_1=C_j'\cap \overline{R^2}$, and $B_2=C_j'\cap \overline{R^4}$. 
Assume that $I_i^1$ comes before $I_j^1$ on $Z$ and $I_i^2$ comes before $I_j^2$ on $Z$. Let $F_1,F_2\in \mathcal{C}_1$, and write $F=F_1\cap F_2$. Then one of the following holds: 
\begin{itemize}
\item If $F$ intersects $C_i'$ and $C_j'$ or $F$ intersects $B_2$ then $F \cap S_i^r(I_i^1) \neq \emptyset$.
\item  If $F$ intersects $C_i'$ and $C_j'$ or $F$ intersects $A_1$, then $F \cap S_j^l(I_j^2)\neq \emptyset$.
\end{itemize}
\end{lemma}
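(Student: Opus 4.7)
My plan is to prove each of the two conditional statements separately by case analysis on which of the four components $A_1$, $A_2$, $B_1$, $B_2$ are met by the convex set $F$, reducing each case to Lemma~\ref{sep} or Lemma~\ref{ssep} whenever possible.

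For the first implication, the clean subcase is when $F \cap A_1 \neq \emptyset$ together with $F \cap B_k \neq \emptyset$ for some $k \in \{1,2\}$. Since $I_i^1$ comes before $I_j^1$ on $Z$ by hypothesis and automatically comes before $I_j^2$ (as any interval on $[f_1, c]$ precedes any interval on $[c, f_0]$ along $Z$), Lemma~\ref{sep} applied with $A = A_1$ and $B = B_k$ immediately yields $F \cap S_i^r(I_i^1) \neq \emptyset$. When instead the hypothesis $F \cap B_2 \neq \emptyset$ holds while $F$ misses $A_1$, I would first invoke the $(4,3)$-property on $\{F_1, F_2, C_i, C_k\}$, where $C_k$ is the third witness set with $C_i \cap C_j \cap C_k \neq \emptyset$, to deduce $F \cap C_i \neq \emptyset$ or $F \cap C_k \neq \emptyset$; intersecting with the two-component structure of $C_i'$, or the analogous structure of $C_k'$ where applicable, then reduces the situation to one already handled by Lemma~\ref{sep} or Lemma~\ref{ssep}. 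The remaining subcase $F \cap A_2 \neq \emptyset$ together with $F \cap B_2 \neq \emptyset$ is treated by applying Lemma~\ref{ssep} to $C_i'$, using the ordering that $I_i^2$ comes before $I_j^2$: this places a point of $F$ on the piecewise curve $S_i'$ of Definition~\ref{def2comp}, and I then verify that the crossing must occur on the initial piece of $S_i'$, which lies on $S_i^r(I_i^1)$.

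The second implication is proved by the mirror-image argument, interchanging the roles of $(i, A_k)$ and $(j, B_k)$ and reversing the direction of $Z$, so that $I_j^2$ plays the role of the ``first'' interval, the hypothesis $F \cap A_1 \neq \emptyset$ mirrors $F \cap B_2 \neq \emptyset$, and $S_j^l(I_j^2)$ takes the place of $S_i^r(I_i^1)$. The main obstacle I anticipate is the one-sided subcase where $F$ meets only a single component and no component of the other set: Lemma~\ref{sep} requires a component from each of $C_i'$ and $C_j'$, so both the $(4,3)$-property (to bring in $C_k$) and a careful geometric check will be needed to verify that the specific supporting line $S_i^r(I_i^1)$, as opposed to some other line on $S_i'$, is the one crossed by $F$; this depends on identifying the side of $S_i^r(I_i^1)$ on which $B_2 \subset \overline{R^4}$ lies under the assumption that $I_i^1$ comes before $I_j^1$ and $I_i^2$ comes before $I_j^2$, and on the choice of supporting line allowed by Definition~\ref{def2comp} in the ambiguous cases.
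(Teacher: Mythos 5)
Your proposal misreads the logical structure of the lemma, and the misreading is fatal to the plan. The lemma does not assert that both bulleted implications hold; it asserts that \emph{at least one} of them holds, uniformly over all pairs $F_1,F_2\in\mathcal{C}_1$ (this uniformity is exactly what allows the main proof to later ``set $T_2$ to be the line obtained by applying Lemma~\ref{sssep}'' and then branch on which of the two lines $T_2$ turned out to be). Each implication taken by itself is not provable from the hypotheses: since $B_2$ lies below $S_i^r(I_i^1)$ in $\overline{R^4}$ (in the paper's sense, i.e.\ disjoint from that line and on the $f_0$-side), nothing in the hypotheses excludes a pair $F_1,F_2\in\mathcal{C}_1$ whose intersection is a small convex set contained in $\overline{R^4}$, lying below $S_i^r(I_i^1)$ there, and meeting $B_2$; such an $F$ satisfies the hypothesis of the first bullet and violates its conclusion. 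So the subcases you flag as needing ``a careful geometric check'' ($F$ meeting $B_2$ but not $A_1$, or $F$ meeting $A_2$ and $B_2$) are not gaps to be patched --- they are genuine obstructions to proving the implications separately, and your attempted reductions there do not work (in particular, Lemma~\ref{ssep} applied to $C_i'$ with $J=I_j^2$ requires $J$ to come \emph{before} $I_i^2$, which is the opposite of the given ordering, and even a successful application would land $F$ on $S_i^l(I_i^2)$, not on $S_i^r(I_i^1)$).

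The missing idea is the one the paper's proof turns on: apply the $(4,3)$-property to \emph{four sets of $\mathcal{C}_1$}, namely a pair $F_1,F_2$ whose intersection meets $A_1$ and lies below $S_j^l(I_j^2)$ in $\overline{R^2}$, together with a pair $F_3,F_4$ whose intersection meets $B_2$ and lies below $S_i^r(I_i^1)$ in $\overline{R^4}$. Any three of these four sets having a common point yields a single set of $\mathcal{C}_1$ with a point in each of the two ``low'' regions, whence a point of $R^1$ by convexity --- a contradiction. Therefore at most one of the two bad configurations occurs anywhere in $\mathcal{C}_1$, and this selects one of the two lines once and for all; the remaining verification that every relevant $F$ crosses the chosen line is then carried out exactly by the tools you cite (Lemma~\ref{sep}, plus the preliminary observation that $A_1$ lies below $S_j^l(I_j^2)$ in $\overline{R^2}$ and $B_2$ lies below $S_i^r(I_i^1)$ in $\overline{R^4}$, and a halfspace argument for the $A_2,B_1$ combination). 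Your only use of the $(4,3)$-property is on $\{F_1,F_2,C_i,C_k\}$, which is a different application (essentially Lemma~\ref{pairs}) and does not supply the dichotomy; without the four-set argument on $\mathcal{C}_1$ itself the proof cannot be completed.
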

\begin{proof}
First note that $A_1$ lies below $S_j^l(I_j^2)$ in $\overline{R^2}$ and $B_2$ lies below $S_i^r(I_i^1)$ in $\overline{R^4}$ 
For instance, if $A_1$ contains a point $p$ lying on or above $S_j^l(I_j^2)$, then the segment $[p,l(I_i^2)]$ crosses $[f_1,c]$ on or above $S_j^l(I_j^2)$, contradicting the fact that $I_i^1$ comes before $I_j^1$ on $Z$. A similar argument applies to the corresponding statement for $B_2$. 

Assume for contradiction that there exists two sets $F_1,F_2\in \mathcal{C}_1$ such that $F_1\cap F_2$ lies below $S_j^l(I_j^2)$ in $\overline{R^2}$ and  $A_1\cap (F_1\cap F_2)\neq \emptyset$, and two sets $F_3,F_4\in \mathcal{C}_1$ such that $F_3\cap F_4$ lies below $S_i^r(I_i^1)$ in $\overline{R^4}$ and $B_2\cap (F_1\cap F_2)\neq \emptyset$. By the $(4,3)$-property, three sets out of $F_1,F_2,F_3,F_4$ have a common point. If $F_1,F_2,F_3$ intersect, then $F_3$ intersects $B_2$ and has a point below $S_j^l(I_j^2)$ in $\overline{R^2}$, which implies that $F_3$ has a point in $R^1$, a contradiction. Similarly, $F_1,F_3,F_4$ cannot intersect. Therefore,  there is no pair of sets in $\mathcal{C}_1$ whose intersection intersects $B_2$ and lies below $S_i^r(I_i^1)$ in $\overline{R^4}$, or there is no pair of sets in $\mathcal{C}_1$ whose intersection intersects $A_1$ and lies below $S_j^l(I_j^2)$ in $\overline{R^2}$.

Assume that there is no pair of sets in $\mathcal{C}_1$ whose intersection lies below $S_i^r(I_i^1)$ in $\overline{R^4}$ and intersects $B_2$, and take $L=S_i^r(I_i^1)$. Let $F$ be the intersection of any pair of sets in $\mathcal{C}_1$. If $F$ intersects $B_2$, then by the above $F$ does not lie below $L$ in $\overline{R^4}$. This implies that $F$ intersects $L$ since $B_2$ lies below $L$ in $\overline{R^4}$. If $F$ intersects $B_1$ and $A_1$, then $F$ intersects $L$ by Lemma \ref{sep}. If $F$ intersects $B_1$ and $A_2$, then $F$ intersects $L$ since  $B_1$ lies in the halfspace defined by $L$ that does not contain $C_i$.

If there is no pair of sets in $\mathcal{C}_1$ whose intersection lies below $S_j^l(I_j^2)$ in $\overline{R^2}$ and intersects $A_1$, then a similar argument shows that the corresponding statements follow for $S_j^l(I_j^2)$.
\end{proof}


\begin{lemma}\label{sline}
If $F\in \mathcal{C}_1$ and $C_i'$ has two components, then $F\cap S_i'$ is an interval.
\end{lemma}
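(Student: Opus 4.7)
The plan is to decompose $S_i'$ into its three constituent pieces (as in Definition~\ref{def2comp}): the first piece $S_1 \subseteq \overline{R^2}$ terminating at $r(I_i^1)$, the middle piece $S_2 = [r(I_i^1), l(I_i^2)]$, and the final piece $S_3 \subseteq \overline{R^4}$ beginning at $l(I_i^2)$. Each is a line segment, so $J_k := F \cap S_k$ is a (possibly empty) convex sub-segment of $S_k$, and I want to show that $J_1 \cup J_2 \cup J_3$ is a connected sub-path of $S_i'$, equivalently an interval.

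First I dispose of the middle piece. The segment $S_2$ lies in the closed triangle with vertices $f_1, c, f_0$, which is contained in $\overline{R^1}$, and its relative interior lies in $R^1 \cup \{c\}$ whenever $r(I_i^1) \neq c$ and $l(I_i^2) \neq c$. Since $F \cap R^1 = \emptyset$ and $c \notin F$, this forces $J_2 \subseteq \{r(I_i^1), l(I_i^2)\}$ in this generic subcase; moreover $F$ cannot contain both endpoints at once, since convexity would then put all of $S_2$ (including its $R^1$-interior) into $F$, a contradiction. Because $r(I_i^1) \in S_1$ and $l(I_i^2) \in S_3$, we get $J_2 \subseteq J_1 \cup J_3$, so $F \cap S_i' = J_1 \cup J_3$. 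The degenerate subcases where $r(I_i^1) = c$ or $l(I_i^2) = c$ place $S_2 \subseteq Z$, and the contribution from $S_2$ is absorbed into $J_1$ or $J_3$ similarly, using $c \notin F$.

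The main step is to show that at most one of $J_1, J_3$ is nonempty, which together with the above finishes the proof. Suppose for contradiction $p \in J_1 \subseteq \overline{R^2}$ and $q \in J_3 \subseteq \overline{R^4}$. Then $[p,q] \subseteq F$ must avoid $R^1 \cup \{c\}$. After moving $c$ to the origin and $L_1, L_2$ to the coordinate axes, a direct parameterization shows that $[p,q]$ meets $R^1$ unless the slope of the ray $cq$ is at least the slope of the reflected ray $cp$. The defining property of $S_i^r(I_i^1)$ and $S_i^l(I_i^2)$ as supporting lines of $C_i$, combined with the fact that $C_i$ has points in $R^1$ adjacent to both $r(I_i^1)$ and $l(I_i^2)$, is used to force the opposite slope inequality via the convexity of the $R^1$-facing boundary arc of $C_i$; so any such chord must enter $R^1$, contradicting $F \in \mathcal{C}_1$.

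The main obstacle is this last slope comparison, which must be verified in each of the four subcases of Definition~\ref{def2comp}. When $A_1 \subseteq L_1$ or $A_2 \subseteq L_2$ the corresponding $S_1$ or $S_3$ is a subsegment of $Z$, and in that situation the chord $[p,q]$ enters the triangle $f_1 c f_0$ (hence $R^1$) almost immediately, giving the conclusion. The subtler case is when both $S_1$ and $S_3$ are supporting-line chords; then the key input is that as one traverses $\partial C_i$ from $r(I_i^1)$ to $l(I_i^2)$ along the $R^1$-facing side, the supporting-line slopes rotate monotonically, and this monotonicity yields the needed slope comparison.
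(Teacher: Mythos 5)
Your reduction is sound and matches the skeleton of the paper's argument: the relative interior of the middle segment $[r(I_i^1),l(I_i^2)]$ lies in $R^1$ (and in fact $r(I_i^1)\neq c\neq l(I_i^2)$ automatically, since a $C_i$ for which $C_i'$ has two components cannot contain $c$, so your ``degenerate subcases'' never arise), hence $F\cap S_i'$ is the union of the two intervals $F\cap(S_i'\cap\overline{R^2})$ and $F\cap(S_i'\cap\overline{R^4})$, and everything reduces to showing that $F$ cannot meet both.

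That last step, however, is the entire content of the lemma, and you leave it as a sketch whose mechanism does not obviously close. You want a slope inequality at $c$ for arbitrary points $p\in S_i'\cap\overline{R^2}$ and $q\in S_i'\cap\overline{R^4}$, and you propose to extract it from the monotone rotation of supporting lines along $\partial C_i$ between the tangent points. But $p$ and $q$ may lie far from the tangent points, so the \emph{direction} of each supporting line does not by itself control where $p$ and $q$ sit relative to $c$; and comparing the two supporting lines to each other is not the comparison that is needed. The fact that actually does the work (recorded in the paper immediately after Definition~\ref{def2comp}) is that both $S_i'\cap\overline{R^2}$ and $S_i'\cap\overline{R^4}$ lie in the closed halfspace $H$ bounded by the line $T=\overline{r(I_i^1)l(I_i^2)}$ containing $f_0$ and $f_1$; this holds because the supporting line at $r(I_i^1)$ must keep $l(I_i^2)\in C_i$ on the $C_i$-side (and symmetrically), which pins each line against $T$ rather than against the other supporting line. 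Granting this, no slope computation or case analysis over the four subcases of Definition~\ref{def2comp} is needed: $H$ contains neither $c$ nor any point of the open quadrant containing $R^3$, so the segment $[p,q]\subset F\cap H$, in passing from $\overline{R^2}$ to $\overline{R^4}$ inside $\U$, must enter $R^1$, contradicting $F\in\mathcal{C}_1$. As written, your crux is asserted rather than proved; either establish the halfspace containment (the clean route) or genuinely carry out the slope comparison in each subcase.
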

\begin{proof}
Clearly, $F\cap (S_i'\cap \overline{R^2})$ and $F\cap (S_i'\cap \overline{R^4})$ are intervals, and $F\cap S_i'=(S_i'\cap \overline{R^2}) \cup (S_i'\cap \overline{R^4})$,  so it suffices to show that $F$ cannot intersect both $S_i'\cap \overline{R^4}$ and $S_i'\cap \overline{R^2}$. 
Suppose it does.
Let $T$ be the line passing through $r(I_i^1)$ and $l(I_i^2)$. By the definition of $S_i'$, both $S_i'\cap \overline{R^2}$ and $S_i'\cap \overline{R^4}$ lie on the closed halfspace defined by $T$ containing $f_0$ and $f_1$. Since $F$ is convex, this implies that $F$ has a point in $R^1$, a contradiction. 
\end{proof}

\begin{lemma}\label{pairs}
Let $F_1,F_2\in \mathcal{C}_1$, then $F_1\cap F_2$ intersects at least two of $C_1,C_2,C_3$.
\end{lemma}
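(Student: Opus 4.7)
The plan is to apply the $(4,3)$-property to four-element subfamilies of the form $\{F_1, F_2, C_i, C_j\}$ for distinct $i, j \in \{1,2,3\}$. The key observation is that a common point of $F_1, C_i, C_j$ (or of $F_2, C_i, C_j$) would have to lie in $C_i \cap C_j \subset R^1$, which is impossible because $F_1, F_2 \in \mathcal{C}_1$ are disjoint from $R^1$. So among the four possible intersecting triples produced by the $(4,3)$-property, only the two triples $\{F_1, F_2, C_i\}$ and $\{F_1, F_2, C_j\}$ remain viable, and whichever one is realized produces a point of $F_1 \cap F_2$ lying in $C_i$ or $C_j$.

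Next, I would run this observation across all three pairs $(C_1, C_2)$, $(C_1, C_3)$, $(C_2, C_3)$. From each pair we conclude that $F_1 \cap F_2$ intersects at least one of the two $C_k$'s in that pair. Suppose for contradiction that $F_1 \cap F_2$ meets only one of $C_1, C_2, C_3$, say $C_1$. Then applying the observation to the pair $(C_2, C_3)$ forces $F_1 \cap F_2$ to meet $C_2$ or $C_3$, a contradiction. Hence $F_1 \cap F_2$ meets at least two of the three sets $C_1, C_2, C_3$.

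There is no real obstacle here; the lemma is essentially a direct application of the $(4,3)$-property combined with the defining property of $\mathcal{C}_1$ (disjointness from $R^1$) and the structural property of $C_1, C_2, C_3$ (pairwise intersections contained in $R^1$). The only thing to verify carefully is that $F_1, F_2, C_i, C_j$ are genuinely four distinct sets so that the $(4,3)$-property applies; this is fine because if $F_1$ or $F_2$ coincided with some $C_k$ it would contradict $F_1, F_2 \in \mathcal{C}_1$ (since no $C_k$ is disjoint from $R^1$ under our standing assumptions).
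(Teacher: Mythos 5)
Your proposal is correct and follows essentially the same route as the paper: apply the $(4,3)$-property to $\{F_1,F_2,C_i,C_j\}$, rule out any intersecting triple containing both $C_i$ and $C_j$ because their pairwise intersection lies in $R^1$ while $F_1,F_2$ avoid $R^1$, and conclude $F_1\cap F_2$ meets $C_i$ or $C_j$. The paper just phrases the final step slightly more directly (if $F_1\cap F_2$ misses $C_1$, the pairs $(C_1,C_2)$ and $(C_1,C_3)$ force it to meet both $C_2$ and $C_3$), but the content is identical.
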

\begin{proof}
Suppose $F_1\cap F_2$ does not intersect $C_1$. Since $(C_1\cap C_2)\subset R^1$ and $F\cap R^1=\emptyset$,  by the $(4,3)$-property for the sets $C_1,C_2,F_1,F_2$, we have that $C_2$ must intersect $F_1\cap F_2$. Similarly, $C_3$ intersects $F_1\cap F_2$. 
\end{proof}

\subsection{Proof of Theorem \ref{main2}}
We wish to show that $\tau(\C_1)\le 2$.
We split into four cases. In each case and subcase, we find two homeomorphic copies of the real line $T_1$ and $T_2$, and show that the family of $2$-intervals $\mathcal{I}=\{(F\cap T_1)\cup (F\cap T_2):F\in \mathcal{C}_1\}$ satisfies $\nu(\mathcal{I})=1$. By Theorem \ref{2intThm}, this implies $\tau(\mathcal{I})\le 2$. 
The curves $T_1, T_2$ will be 
of the form $S_i^r(I)$, $S_i^l(I)$, or $S_i'$, and  Lemma \ref{sline} ensures that $\mathcal{I}$ is indeed a family of $2$-intervals. Recall that $I_i^1=C_i \cap [f_1,c]$,  $I_i^2=C_i\cap [c,f_0]$, and $I_i=C_i\cap Z$. 

\bigskip\noindent
\textbf{Case 1.} $C_i'$ has one component for each $i$ (see Figure \ref{fig:case1}).

Notice in this case each $I_i$ is an interval. Assume without loss of generality that $I_1$ comes before $I_2$ and $I_2$ comes before $I_3$ on $Z$.

Set $T_1=S_1^r(I_1)$ and $T_2=S_2^r(I_2)$, and let $F_1,F_2\in \mathcal{C}_1$. By Lemma \ref{pairs}, $F_1\cap F_2$ intersects two of the $C_i$. Then, by Lemma \ref{sep}, $F_1\cap F_2$ intersects $T_1$ or $T_2$. It follows then our collection of $2$-intervals, $\mathcal{I}$, has matching number $1$.
\begin{figure}[h!]
    \centering
\begin{tikzpicture}[scale=.8]
\draw (0,0) circle (3cm);
\filldraw [black] (3,0) circle (2pt) node[right] {$f_0$};
\filldraw [black] (95:3cm) circle (2pt) node[above] {$\ \ \ \ \ f_3$};
\filldraw [black] (145:3cm) circle (2pt) node[left] {$f_2$};
\filldraw [black] (270:3cm) circle (2pt) node[below] {$f_1$};

\filldraw [black] (210:4cm) node[below] {$T_1=S_1^r(I_1)$};
\draw (210:4cm) -- (13:4cm);

\filldraw [black] (315:4cm) node[below] {$T_2=S_2^r(I_2)$};
\draw (315:4cm) -- (115:4cm);

\draw (3,0) -- (145:3cm);
\draw (95:3cm) -- (270:3cm);

\draw (315:1cm) circle [x radius=2cm, y radius=3mm, rotate=200];
\draw (325:.5cm) circle [x radius=5mm, y radius=2.4cm, rotate=30];
\draw (335:1.25cm) circle [x radius=5mm, y radius=2cm, rotate=-15];
\end{tikzpicture}
\caption{Case 1}
    \label{fig:case1}
\end{figure}

\bigskip\noindent
\textbf{Case 2.} One of the $C_i'$'s has two components (see Figure \ref{fig:Case2}).

We can assume without loss of generality that $C_3'$ has two components, and that $I_1$ comes before $I_2$ on $Z$.

\medskip 
{\bf Subcase 2.1.} If the order of the intervals on $Z$ is $I_1,I_2,I_3^1,I_3^2$, then set $T_1=S_1^r(I_1)$ and $T_2=S_2^r(I_2)$. 

\medskip 
{\bf Subcase 2.2.} If the order of the intervals is $I_1, I_3^1, I_2, I_3^2$, then set $T_1=S_3'$ and $T_2=S_1^r(I_1)$. 

\medskip 
{\bf Subcase 2.3.} If the order of the intervals is $I_1, I_3^1, I_3^2, I_2$, then set $T_1=S_1^r(I_1)$ and $T_2=S_2^l(I_2)$.

\medskip 
{\bf Subcase 2.4.} If the order of the intervals is $I_3^1, I_1,I_2,I_3^2$, then set $T_1=S_3'$ and $T_2=S_1^r(I_1)$.

The remaining subcases of Case 2 are symmetrical. For instance, the case where the order of the intervals is $I_3^1,I_3^2,I_1,I_2$ follows similarly to the case where the order of the intervals is $I_1,I_2,I_3^1,I_3^2$.

\begin{figure}[h!]
\centering
\begin{tikzpicture}[scale=.45]
\draw (0,0) circle (3cm);
\filldraw [black] (3,0) circle (2pt) node[right] {};
\filldraw [black] (95:3cm) circle (2pt) node[above] {};
\filldraw [black] (145:3cm) circle (2pt) node[left] {};
\filldraw [black] (270:3cm) circle (2pt) node[below] {};



\draw (3,0) -- (145:3cm);
\draw (95:3cm) -- (270:3cm);

\draw [blue] (280:1.3cm) circle [x radius=1.7cm, y radius=2.7mm, rotate=250];
\draw [red] (260:.8cm) circle [x radius=2mm, y radius=1.5cm, rotate=-40];
\draw [green] (335:0cm) circle [x radius=2mm, y radius=2cm, rotate=-60];
\end{tikzpicture}
\begin{tikzpicture}[scale=.45]
\draw (0,0) circle (3cm);
\filldraw [black] (3,0) circle (2pt) node[right] {};
\filldraw [black] (95:3cm) circle (2pt) node[above] {};
\filldraw [black] (145:3cm) circle (2pt) node[left] {};
\filldraw [black] (270:3cm) circle (2pt) node[below] {};



\draw (3,0) -- (145:3cm);
\draw (95:3cm) -- (270:3cm);

\draw [blue] (280:.8cm) circle [x radius=1.5cm, y radius=2.7mm, rotate=240];
\draw [red] (60:.4cm) circle [x radius=2mm, y radius=1.5cm, rotate=40];
\draw [green] (335:0cm) circle [x radius=2mm, y radius=2cm, rotate=-60];
\end{tikzpicture}
\begin{tikzpicture}[scale=.45]
\draw (0,0) circle (3cm);
\filldraw [black] (3,0) circle (2pt) node[right] {};
\filldraw [black] (95:3cm) circle (2pt) node[above] {};
\filldraw [black] (145:3cm) circle (2pt) node[left] {};
\filldraw [black] (270:3cm) circle (2pt) node[below] {};



\draw (3,0) -- (145:3cm);
\draw (95:3cm) -- (270:3cm);

\draw [blue] (280:.8cm) circle [x radius=1.5cm, y radius=2.7mm, rotate=240];
\draw [red] (190:-1.7cm) circle [x radius=2mm, y radius=1.15cm, rotate=-90];
\draw [green] (335:0cm) circle [x radius=2mm, y radius=2cm, rotate=-60];
\end{tikzpicture}
\begin{tikzpicture}[scale=.45]
\draw (0,0) circle (3cm);
\filldraw [black] (3,0) circle (2pt) node[right] {};
\filldraw [black] (95:3cm) circle (2pt) node[above] {};
\filldraw [black] (145:3cm) circle (2pt) node[left] {};
\filldraw [black] (270:3cm) circle (2pt) node[below] {};



\draw (3,0) -- (145:3cm);
\draw (95:3cm) -- (270:3cm);

\draw [blue] (0:.4cm) circle [x radius=2cm, y radius=2.7mm, rotate=140];
\draw [red] (175:-1.1cm) circle [x radius=2mm, y radius=1.5cm, rotate=15];
\draw [green] (335:1cm) circle [x radius=2mm, y radius=2cm, rotate=-60];
\end{tikzpicture}
 \caption{The configuration of each $C_i$ in the subcases of Case 2. Blue represents $C_1$, red represents $C_2$, and green represents $C_3$.}
    \label{fig:Case2}
\end{figure}
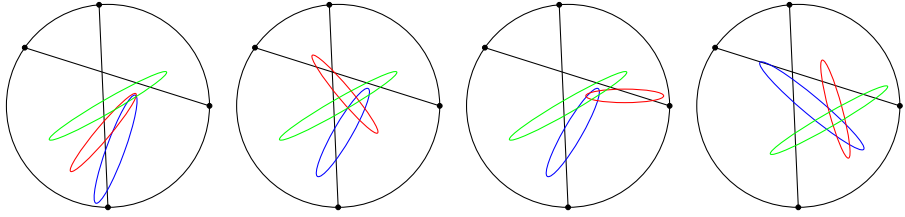

\bigskip\noindent
\textbf{Case 3.} Two of the $C_i'$'s have two components (see Figure \ref{fig:Case3}).

Without loss of generality, assume $C_2'$ and $C_3'$  have two components.

\medskip 
{\bf Subcase 3.1.} Assume the order of the intervals is $I_1,I_2^1,I_3^1, I_3^2, I_2^2$, then set $T_1=S_1^r(I_1)$ and $T_2=S_2'$.

\medskip 
{\bf Subcase 3.2.} If the order of the intervals is $I_1,I_2^1,I_3^1, I_2^2, I_3^2$, then set $T_1=S_1^r(I_1)$ and $T_2$ to be the line obtained by applying Lemma \ref{sssep} to $C_2$ and $C_3$.


\medskip 
{\bf Subcase 3.3.} If the order of the intervals is $I_2^1,I_1,I_3^1, I_3^2, I_2^2$, then set $T_1=S_1^r(I_1)$ and $T_2=S_2'$.

\medskip 
{\bf Subcase 3.4.} If the order of the intervals is $I_2^1,I_1,I_3^1, I_2^2, I_3^2$, then set $T_1=S^r(I_1)$ and $T_2$ to be the line obtained by applying Lemma \ref{sssep} to $C_2$ and $C_3$. Let $F$ be the intersection of a pair of sets in $\mathcal{C}_1$. If $F$ intersects $C_2$ and $C_3$, then $F$ intersects $T_2$ by Lemma \ref{sssep}. If $F$ intersects $C_1$ and $C_3$ or $C_1$ and $C_2'\cap \overline{R^4}$, then $F$ intersects $T_1$. If $T_2=S_2^r(I_2^1)$ and $F$ intersects $C_1$ and $C_2'\cap \overline{R^2}$, then $F$ intersects $T_2$ by Lemma \ref{sep}. If $T_2=S_3^l(I_3^2)$ and $F$ intersects $C_1$ and $C_2'\cap \overline{R^2}$, then $F$ intersects $T_2$ by Lemma \ref{sssep}.

\medskip 
{\bf Subcase 3.5.} If the order of the intervals is $I_2^1,I_3^1,I_1, I_3^2, I_2^2$, then set $T_1=S_2'$ and $T_2=S_3'$.

\medskip 
{\bf Subcase 3.6.} If the order of the intervals is $I_2^1,I_3^1,I_1, I_2^2, I_3^2$, then set $T_1=S_2'$ and $T_2=S_3'$.

The remaining subcases are symmetrical.

\begin{figure}[h!]
\centering
\begin{tikzpicture}[scale=.45]
\draw (0,0) circle (3cm);
\filldraw [black] (3,0) circle (2pt) node[right] {};
\filldraw [black] (95:3cm) circle (2pt) node[above] {};
\filldraw [black] (145:3cm) circle (2pt) node[left] {};
\filldraw [black] (270:3cm) circle (2pt) node[below] {};



\draw (3,0) -- (145:3cm);
\draw (95:3cm) -- (270:3cm);

\draw [blue] (100:-1.1cm) circle [x radius=2mm, y radius=1.5cm, rotate=160];
\draw [green] (335:0cm) circle [x radius=2mm, y radius=2cm, rotate=-60];
\draw [red] (245:1.5cm) -- (35:.5cm);
\draw [red] (15:2.4cm) -- (35:.5cm);
\draw [red] (245:1.5cm) -- (15:2.4cm);
\end{tikzpicture}
\begin{tikzpicture}[scale=.45]
\draw (0,0) circle (3cm);
\filldraw [black] (3,0) circle (2pt) node[right] {};
\filldraw [black] (95:3cm) circle (2pt) node[above] {};
\filldraw [black] (145:3cm) circle (2pt) node[left] {};
\filldraw [black] (270:3cm) circle (2pt) node[below] {};



\draw (3,0) -- (145:3cm);
\draw (95:3cm) -- (270:3cm);

\draw [blue] (100:-1.1cm) circle [x radius=2mm, y radius=1.5cm, rotate=160];
\draw [green] (335:0cm) circle [x radius=2mm, y radius=2cm, rotate=-70];
\draw [red] (245:1.5cm) -- (40:2.2cm);
\draw [red] (40:2.2cm) -- (35:.5cm);
\draw [red] (245:1.5cm) -- (35:.5cm);
\end{tikzpicture}
\begin{tikzpicture}[scale=.45]
\draw (0,0) circle (3cm);
\filldraw [black] (3,0) circle (2pt) node[right] {};
\filldraw [black] (95:3cm) circle (2pt) node[above] {};
\filldraw [black] (145:3cm) circle (2pt) node[left] {};
\filldraw [black] (270:3cm) circle (2pt) node[below] {};



\draw (3,0) -- (145:3cm);
\draw (95:3cm) -- (270:3cm);

\draw [blue] (80:-.85cm) circle [x radius=2mm, y radius=1.5cm, rotate=145];
\draw [green] (335:0cm) circle [x radius=2mm, y radius=2cm, rotate=-60];
\draw [red] (265:2.5cm) -- (35:.5cm);
\draw [red] (15:2.4cm) -- (35:.5cm);
\draw [red] (265:2.5cm) -- (15:2.4cm);
\end{tikzpicture}
\begin{tikzpicture}[scale=.45]
\draw (0,0) circle (3cm);
\filldraw [black] (3,0) circle (2pt) node[right] {};
\filldraw [black] (95:3cm) circle (2pt) node[above] {};
\filldraw [black] (145:3cm) circle (2pt) node[left] {};
\filldraw [black] (270:3cm) circle (2pt) node[below] {};



\draw (3,0) -- (145:3cm);
\draw (95:3cm) -- (270:3cm);

\draw [blue] (100:-.55cm) circle [x radius=2mm, y radius=1.5cm, rotate=130];
\draw [green] (335:0cm) circle [x radius=2mm, y radius=2cm, rotate=-70];
\draw [red] (260:2.5cm) -- (55:2.2cm);
\draw [red] (55:2.2cm) -- (35:.5cm);
\draw [red] (260:2.5cm) -- (35:.5cm);
\end{tikzpicture}

\begin{tikzpicture}[scale=.45]
\draw (0,0) circle (3cm);
\filldraw [black] (3,0) circle (2pt) node[right] {};
\filldraw [black] (95:3cm) circle (2pt) node[above] {};
\filldraw [black] (145:3cm) circle (2pt) node[left] {};
\filldraw [black] (270:3cm) circle (2pt) node[below] {};



\draw (3,0) -- (145:3cm);
\draw (95:3cm) -- (270:3cm);

\draw [blue] (50:.4cm) circle [x radius=2mm, y radius=1.5cm, rotate=200];
\draw [green] (335:0cm) circle [x radius=2mm, y radius=2cm, rotate=-60];
\draw [red] (265:2.5cm) -- (35:.5cm);
\draw [red] (15:2.4cm) -- (35:.5cm);
\draw [red] (265:2.5cm) -- (15:2.4cm);
\end{tikzpicture}
\begin{tikzpicture}[scale=.45]
\draw (0,0) circle (3cm);
\filldraw [black] (3,0) circle (2pt) node[right] {};
\filldraw [black] (95:3cm) circle (2pt) node[above] {};
\filldraw [black] (145:3cm) circle (2pt) node[left] {};
\filldraw [black] (270:3cm) circle (2pt) node[below] {};



\draw (3,0) -- (145:3cm);
\draw (95:3cm) -- (270:3cm);

\draw [blue] (50:.4cm) circle [x radius=2mm, y radius=1.5cm, rotate=200];
\draw [green] (335:0cm) circle [x radius=2mm, y radius=2cm, rotate=-70];
\draw [red] (245:1.5cm) -- (40:2.2cm);
\draw [red] (40:2.2cm) -- (35:.5cm);
\draw [red] (245:1.5cm) -- (35:.5cm);
\end{tikzpicture}
 \caption{The configuration of each $C_i$ in the subcases of Case 3.}
    \label{fig:Case3}

\end{figure}
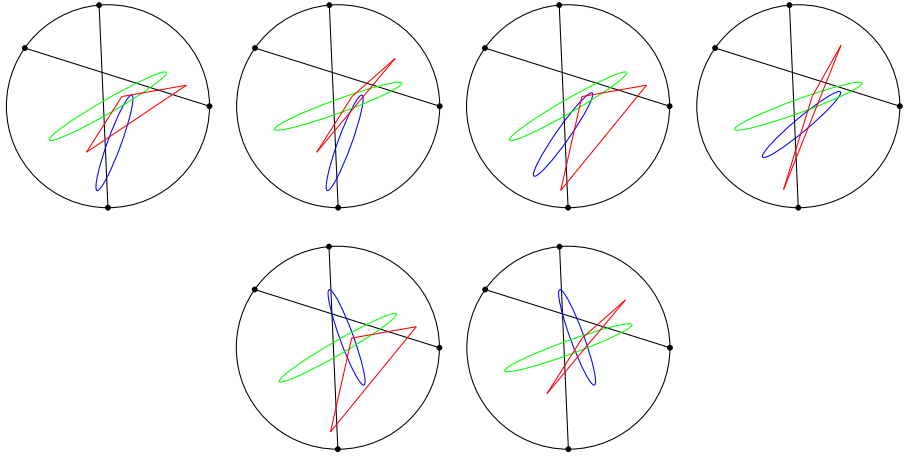

\bigskip\noindent
\textbf{Case 4:} Each $C_i'$ has two components (see Figure \ref{fig:Case4}).

\medskip 
{\bf Subcase 4.1.} If the order of the intervals is $I_1^1,I_2^1,I_3^1,I_3^2,I_2^2,I_1^2$, then set $T_1=S_1'$ and $T_2=S_2'$.

\medskip 
{\bf Subcase 4.2.} If the order of the intervals is $I_1^1,I_2^1,I_3^1,I_3^2,I_1^2,I_2^2$, then set $T_1=S_1'$ and $T_2=S_2'$.

\medskip 
{\bf Subcase 4.3.} If the order of the intervals is $I_1^1,I_2^1,I_3^1,I_1^2,I_2^2,I_3^2$, then set $T_1=S_2'$ and $T_2$ to be the line obtained by applying Lemma \ref{sssep} to $C_1$ and $C_3$. Let $F$ be the intersection of a pair of sets in $\mathcal{C}_1$. If $F$ intersects $C_1$ and $C_3$, then $H$ intersects $T_2$ by Lemma \ref{sssep}. If $F$ intersects $C_2'\cap \overline{R^4}$ and $C_1$ or $C_2'\cap \overline{R^4}$ and $C_3'\cap \overline{R^2}$, then $F$ intersects $T_1$ by Lemma \ref{ssep}. If $F$ intersects $C_2'\cap \overline{R^2}$ and $C_3$ or $C_2'\cap \overline{R^2}$ and $C_1'\cap \overline{R^4}$, then $H$ intersects $T_1$ by Lemma \ref{ssep}.

If $T_2=S_1^r(I_1^1)$ and $F$ intersects $C_3'\cap \overline{R^4}$, then $F$ intersects $T_2$ by Lemma \ref{sssep}. If $F$ intersects $C_2'\cap \overline{R^2}$ and $C_1'\cap \overline{R^2}$, then $F$ intersects $T_2$ by Lemma \ref{sep}.

If $T_2=S_3^l(I_3^1)$ and $F$ intersects $C_1'\cap \overline{R^2}$, then $F$ intersects $T_2$ by Lemma \ref{sssep}. If $F$ intersects $C_2'\cap \overline{R^4}$ and $C_3'\cap \overline{R^4}$, then $F$ intersects $T_2$ by Lemma \ref{sep}.

Therefore, the resulting family of $2$-intervals coming from these two lines has matching number $1$. 


\medskip 
{\bf Subcase 4.4.} If the order of the intervals is $I_1^1,I_2^1,I_3^1,I_2^2,I_3^2,I_1^2$, then set $T_1=S_1'$ and $T_2$ to be the line obtained by applying Lemma \ref{sssep} to $C_2$ and $C_3$. 


\medskip 
{\bf Subcase 4.5.} If the order of the intervals is $I_1^1,I_2^1,I_3^1,I_2^2,I_1^2,I_3^2$, then set $T_1=S_1'$ and $T_2$ to be the line obtained by applying Lemma \ref{sssep} to $C_2$ and $C_3$. A similar argument as in subcase 3 shows that the resulting family of $2$-intervals coming from these two lines has matching number $1$.

\begin{figure}[h!]
    \centering

\begin{tikzpicture}[scale=.45]
\draw (0,0) circle (3cm);
\filldraw [black] (3,0) circle (2pt) node[right] {};
\filldraw [black] (95:3cm) circle (2pt) node[above] {};
\filldraw [black] (145:3cm) circle (2pt) node[left] {};
\filldraw [black] (270:3cm) circle (2pt) node[below] {};



\draw (3,0) -- (145:3cm);
\draw (95:3cm) -- (270:3cm);

\draw [blue] (265:2.5cm) -- (35:.5cm);
\draw [blue] (5:2.8cm) -- (35:.5cm);
\draw [blue] (265:2.5cm) -- (5:2.8cm);
\draw [green] (335:0cm) circle [x radius=2mm, y radius=2cm, rotate=-60];
\draw [red] (245:1.5cm) -- (35:.5cm);
\draw [red] (15:2.4cm) -- (35:.5cm);
\draw [red] (245:1.5cm) -- (15:2.4cm);
\end{tikzpicture}
\begin{tikzpicture}[scale=.45]
\draw (0,0) circle (3cm);
\filldraw [black] (3,0) circle (2pt) node[right] {};
\filldraw [black] (95:3cm) circle (2pt) node[above] {};
\filldraw [black] (145:3cm) circle (2pt) node[left] {};
\filldraw [black] (270:3cm) circle (2pt) node[below] {};



\draw (3,0) -- (145:3cm);
\draw (95:3cm) -- (270:3cm);

\draw [blue] (265:2.5cm) -- (35:.5cm);
\draw [blue] (20:2.6cm) -- (35:.5cm);
\draw [blue] (265:2.5cm) -- (20:2.6cm);
\draw [green] (335:0cm) circle [x radius=2.3mm, y radius=2cm, rotate=-50];
\draw [red] (245:1.5cm) -- (35:.5cm);
\draw [red] (5:2.8cm) -- (35:.5cm);
\draw [red] (245:1.5cm) -- (5:2.8cm);
\end{tikzpicture}
\begin{tikzpicture}[scale=.45]
\draw (0,0) circle (3cm);
\filldraw [black] (3,0) circle (2pt) node[right] {};
\filldraw [black] (95:3cm) circle (2pt) node[above] {};
\filldraw [black] (145:3cm) circle (2pt) node[left] {};
\filldraw [black] (270:3cm) circle (2pt) node[below] {};



\draw (3,0) -- (145:3cm);
\draw (95:3cm) -- (270:3cm);

\draw [blue] (265:2.5cm) -- (35:.3cm);
\draw [blue] (63:2.8cm) -- (35:.3cm);
\draw [blue] (265:2.5cm) -- (63:2.8cm);
\draw [green] (345:.5cm) circle [x radius=2mm, y radius=2cm, rotate=-70];
\draw [red] (255:1.7cm) -- (35:2.2cm);
\draw [red] (35:2.2cm) -- (35:.7cm);
\draw [red] (255:1.7cm) -- (35:.7cm);
\end{tikzpicture}
\begin{tikzpicture}[scale=.45]
\draw (0,0) circle (3cm);
\filldraw [black] (3,0) circle (2pt) node[right] {};
\filldraw [black] (95:3cm) circle (2pt) node[above] {};
\filldraw [black] (145:3cm) circle (2pt) node[left] {};
\filldraw [black] (270:3cm) circle (2pt) node[below] {};



\draw (3,0) -- (145:3cm);
\draw (95:3cm) -- (270:3cm);

\draw [blue] (265:2.5cm) -- (35:.5cm);
\draw [blue] (5:2.8cm) -- (35:.5cm);
\draw [blue] (265:2.5cm) -- (5:2.8cm);
\draw [green] (335:0cm) circle [x radius=2mm, y radius=2cm, rotate=-70];
\draw [red] (245:2cm) -- (50:2.2cm);
\draw [red] (50:2.2cm) -- (35:0.2cm);
\draw [red] (245:2cm) -- (35:0.2cm);
\end{tikzpicture}

\begin{tikzpicture}[scale=.45]
\draw (0,0) circle (3cm);
\filldraw [black] (3,0) circle (2pt) node[right] {};
\filldraw [black] (95:3cm) circle (2pt) node[above] {};
\filldraw [black] (145:3cm) circle (2pt) node[left] {};
\filldraw [black] (270:3cm) circle (2pt) node[below] {};



\draw (3,0) -- (145:3cm);
\draw (95:3cm) -- (270:3cm);

\draw [blue] (265:2.5cm) -- (0:0.2cm);
\draw [blue] (40:2.8cm) -- (0:0.2cm);
\draw [blue] (265:2.5cm) -- (40:2.8cm);
\draw [green] (0:.5cm) circle [x radius=2mm, y radius=2cm, rotate=-80];
\draw [red] (245:2cm) -- (50:2.2cm);
\draw [red] (50:2.2cm) -- (35:0.2cm);
\draw [red] (245:2cm) -- (35:0.2cm);
\end{tikzpicture}
 \caption{The configuration of each $C_i$ in the subcases of Case 4}
    \label{fig:Case4}

\end{figure}
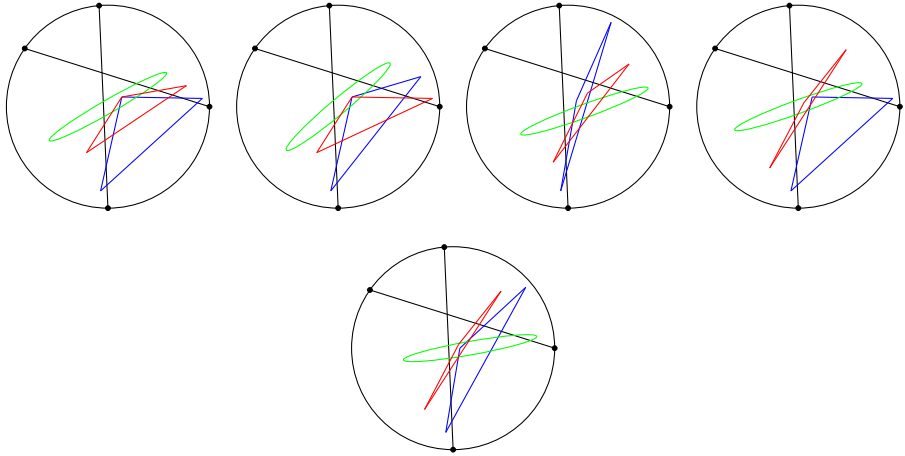

\newpage

\section{Acknowledgement}
The author would like to thank Shira Zerbib for many helpful remarks and for improving the overall presentation of this paper.


\end{document}